\newtheorem{definition}{Definition}
\newtheorem{claim}{Claim}
\newtheorem{remark}{Remark}
\newtheorem{theorem}{Theorem}
\newtheorem{example}{Example}
\numberwithin{equation}{section}
\newcommand{\be}{\begin{equation}}
	\newcommand{\ee}{\end{equation}}
\newcommand{\ben}{\begin{enumerate}}
	\newcommand{\een}{\end{enumerate}}
\newcommand{\beq}{\begin{eqnarray}}
	\newcommand{\eeq}{\end{eqnarray}}
\newcommand{\beqn}{\begin{eqnarray*}}
	\newcommand{\eeqn}{\end{eqnarray*}}
\begin{document}
	\title{Geometry of a Navigation problem: The $\lambda-$Funk Finsler Metrics}
	\author{Newton Sol\'orzano, V\'ictor Le\'on, Alexandre Henrique and Marcelo Souza}
	
\address{N. Sol\'orzano. ILACVN - CICN, Universidade Federal da Integração Latino-Americana, Parque tecnológico de Itaipu, Foz do Iguaçu-PR, 85867-970 - Brazil}
	\email{nmayer159@gmail.com}

\address{V. Le\'on. ILACVN - CICN, Universidade Federal da Integração Latino-Americana, Parque tecnológico de Itaipu, Foz do Iguaçu-PR, 85867-970 - Brazil}
\email{victor.leon@unila.edu.br}

\address{A. Henrique. ILACVN - CICN, Universidade Federal da Integração Latino-Americana, Parque tecnológico de Itaipu, Foz do Iguaçu-PR, 85867-970 - Brazil}
\email{alexandrehrfilho@gmail.com}

\address{M. Souza. Universidade Federal de Goiás - UFG, GO, Brazil}
\email{msouza\underline{ }2000@yahoo.com}

	

	\begin{abstract}
We investigate the travel time in a navigation problem from a geometric perspective. The setting involves an open subset of the Euclidean plane, representing a lake perturbed by a symmetric wind flow proportional to the distance from the origin. The Randers metric derived from this physical problem generalizes the well-known Euclidean metric on the Cartesian plane and the Funk metric on the unit disk. We obtain formulas for distances, or travel times, from point to point, from point to line, and vice-versa.	\end{abstract}
	
	\keywords{Navigation problem; $\lambda-$Funk metric; Finsler metric.}   
	\subjclass[2020]{53B40, 53C60}
	\date{\today}

	\maketitle
\section{Introduction}
The Randers Metrics are important Finsler metrics, defined as the sum of a Riemann metric and a $1-$form. These metrics were first studied by the physicist G. Randers in 1941 from the standard point of general relativity \cite{Randers1941}. Later on, these metrics were applied to the theory of the electron microscope by R. S. Ingarden in 1957, who first named them Randers metrics. Since then, Randers metrics have been used in many areas like Biology, Ecology, Physics, Seismic Ray Theory, etc.

The Zermelo Navigation problem came to Zermelo's mind when the airship ``Graf Zeppelin'' circumnavigated the earth in August 1929. He considered a vector field given in the Euclidean plane that describes the distribution of winds as depending on place and time and treated the question of how an airship or plane, moving at a constant speed against the surrounding air, has to fly in order to reach a given point $Q$ from a given point $P$ in the shortest time possible \cite{Heinz2015}.

In \cite{DBao2004} the authors described Zermelo's navigation problem on Riemannian manifolds and showed that the path with shortest travel time is the geodesic of Randers metrics. Conversely, they showed constructively that every Randers metric arises as a solution to Zermelo's navigational problem on some Riemannian landscape under the influence of an appropriate wind. See Proposition 1.1 in Section 1.3 in \cite{DBao2004}. The Funk metric on the unit n-dimensional ball $\mathbb{B}^n(1)$, which is one of the most important Randers metrics, can be obtained by perturbing the Euclidean metric $||.||$ by the vector field $W_x=-x$.   Particularly, in \cite{Chavez2021}, the authors considered a lake in the shape of the unit disk $\mathbb{B}^2$, with the concentric and symmetric wind current given by the vector field $W(x_1,x_2)=(-x_1,-x_2)$. The distance function (or travel time) in this context, for $P\neq Q$, is given by:
\begin{equation}\label{eq1}
	d_F(P,Q)=\ln\left(\frac{\sqrt{\langle P, Q-P \rangle^2+(1-\Vert P\Vert^2)\Vert Q-P\Vert^2}-\langle P, Q-P \rangle}{\sqrt{\langle P, Q-P\rangle^2+(1-\Vert P\Vert^2)\Vert Q-P\Vert^2}-\langle Q, Q-P \rangle}
	\right), 
\end{equation}
where $\langle\cdot,\cdot\rangle$ and $\Vert \cdot \Vert$ are the usual inner product and the usual Euclidean norm, respectively, and $d_F(P,P)=0$. Also, in \cite{Chavez2021}, equations for the circle and formulas for the distance from a point to a line and from a line to a point were also obtained. Other geometric properties were studied in \cite{Shen2001} for more general Funk metrics.

In this work, we consider the wind current given by the vector field $W_\lambda(x_1,x_2)=\lambda(-x_1,-x_2),$ where $\lambda \geq 0.$ (For $\lambda<0$ the study is analogous). With this, we define the $\lambda-$Funk metric. Naturally, when $\lambda=0,$ we obtain the Euclidean norm; when $\lambda=1$, we get the Funk metric on $\mathbb{B}^2$. We note the $\lambda-$Funk metric is spherically symmetric Finsler metric. In Section 2 we recall some basic results for the well development of the work. In Section 3 we obtain and define the $\lambda-$Funk metric,  recalling results about spherically symmetric Finsler metrics we prove that their geodesics are straight lines. In Section 4 we obtain the $\lambda-$Funk distance (or traveling time), and we give some properties such as their non-symmetry. In Section 5 we classify the circumference, and with this, we obtain formulas for the distance from point to line and from line to point. 
\section{Preliminaries}
In this section, some definitions and results necessary for the development of our work are introduced. We adopt the definitions given in \cite{Chavez2021}, which can also be found in \cite{Carmo2019}, such as inner product, norm, regular curve, arc length (which will be referred to as usual or Euclidean), and vector field.

In the following, $\mathbb{R}^2$ denote the Cartesian plane, which is the set of ordered pairs $(x_1,x_2),$ where $x_1, x_2 \in \mathbb{R}.$

We present a simplified definition of a Finsler metric on an open subset $U$ of $\mathbb{R}^n$. For a more comprehensive treatment of Finsler metrics on general manifolds, we refer to \cite{GuoMo2018,ChengShen2012,Shen2001}.

\begin{definition}\label{def:Finsler}
	The function $F:U\times \mathbb{R}^n\to \mathbb{R},$ where $U\subset \mathbb{R}^n$, is called Finsler metric on $U$, if for $x\in U$ and $y\in \mathbb{R}^n$, $F$ satisfies the following properties:
	\begin{enumerate}
		\item $F(x,y)$ is $C^{\infty}$ for all $x\in U$ and $y\neq 0$;
		\item $F(x,y)>0$, for all $x\in U$ and $y\neq 0$;
		\item $F(x,\delta y) = \delta F(x,y)$, where $\delta$ is any positive real number;
		\item The Hessian matrix of $\frac{1}{2}F^2$, denoted by $[g_{ij}]$,
		\begin{align}
			\left[g_{ij}\right]&=\left[\frac{1}{2}\frac{\partial^2F^2}{\partial y_i \partial y_j}\right]\nonumber
		\end{align} is positive definite. 
	\end{enumerate}
\end{definition}


For example, Euclidean norm, Riemannian metrics or the Funk metric defined below are Finsler metrics.
\begin{definition}\label{FunkMetric}{\em 
		Let $ x=(x_1,x_2)\in{\Omega}\subset \mathbb{R}^2$ and $y=(y_1,y_2)\in\mathbb{R}^2$. 
		The function
		\begin{align*}
			F(x,y)=&\sqrt{a_{11}(x)y_1^2+ 2a_{12}(x)y_1y_2 + a_{22}(x)y_2^2}+b_1(x)y_1+b_2(x)y_2,
		\end{align*}
		where $b_1= \dfrac{x_1}{1-x_1^2-x_2^2}$, $b_2= \dfrac{x_2}{1-x_1^2-x_2^2}$
		and
		\begin{displaymath}
			[a_{ij}]=\frac{1}{(1-x_1^2-x_2^2)^2}\left(\begin{array}{cc}
				{1-x_2^2}  & {x_1x_2} \\
				{x_1x_2} & {1-x_1^2}
			\end{array}\right),
		\end{displaymath}
		is called the \textit{Funk metric on the unit disk} $ \mathbb{B}^2=\{x\in\mathbb{R}^2;\; x_1^2+x_2^2<1\}$.}
\end{definition}


Remembering that the usual Euclidean norm of the vector $y$ is defined by $\Vert y\Vert=\sqrt{y_1^2+y_2^2}$, we have that the function $F$ in the above definition can be interpreted as a generalization or perturbation of the usual Euclidean norm. That is, $F(x,y)$ can be thought of as the “norm” of the vector $y\in \mathbb{R}^2$ at the point $x\in \mathbb{B}^2$. In the language of Finsler geometry, this perturbed “norm” is called the Funk metric on the unit disk $\mathbb{B}^2$.

\begin{definition}[Finsler Arc Length]\label{def:arc}{\em
		Let $c:[a,b]\rightarrow U\subset\mathbb{R}^2$ be a piecewise regular curve. The \textit{(Finsler-type) arc length of $c$} is defined by
		\[ \mathscr{L}_F(c):=\int_a^bF(c(t),c'(t))dt,\]
		where $F$ is a Finsler metric. For any points $P, Q \in \mathbb{B}^2$, the \textit{distance} from $P$ to $Q$ induced by $F$ is defined as
		\[ d_F(P,Q):=\operatorname{inf}_c\mathscr{L}_F(c),\]
		where the infimum is taken over the set of all piecewise regular curves $c:[a,b]\to U$ such that $c(a)=P $ and $ c(b)=Q $.}
\end{definition}

The Finsler-type arc length, when $F$ is a Randers metric can be interpreted as traveling time of a boat sailing along the curve $c$, from $c(a)$ to $c(b),$ for example, the Funk metric models a boat sailing with unit speed in $\mathbb{B}^2$, where a wind current given by $W_x=(-x_1,-x_2)$ is present with speed less than 1. (see Section 3 in \cite{Chavez2021}). The Funk metric in $\mathbb{B}^2$ is a special case of Randers metrics. Some properties were studied in \cite{Shen2001} for more general cases. 

A Finsler metric $F$ on an open subset $U\subset \mathbb{R}^n$ is said to be {\it Projectively flat} if all geodesics are straight lines in $U.$

It is known that the Funk metric is projectively flat, it is, their \textit{shortest paths} are straight lines (see Example 9.2.1 in \cite{Shen2001}).  More details on geodesics and their relation to shortest paths can be found in Section 3.2 in \cite{ChernShen2005} and Section 2.3 in \cite{ChengShen2012}.\par 
The distance induced by the Funk metric is given by \eqref{eq1}, in Remark 4.2, \cite{Chavez2021} proved that $d_F$ given by \eqref{eq1} is not reversible ($d_F(P,Q)\neq d_F(Q,P)$) and it is not translation invariant, but the one is rotation invariant. Additionally, considering $O=(0,0)$, in Remark 5.3, \cite{Chavez2021} showed that \[\lim\limits_{\Vert Q\Vert \to 1} d_F(O,Q) = +\infty\;\mbox{ and }\;\lim\limits_{\Vert P\Vert \to 1} d_F(P,O) = \ln 2.\] In other words, if a boat starts from the origin of the disk towards the boundary, it will take an infinite time to reach the destination, meaning it never reaches the boundary. And if a boat starts from the boundary of $\mathbb{B}^2$ towards the origin of the disk, the minimum travel time is $\ln 2$ units of time.

Equation \eqref{eq1} can be more manageable using the following version of Theorem 5.1 in \cite{Chavez2021}.
\begin{theorem}[Theorem 2.3 in \cite{Chavez2024}]\label{maintheoremPF}{\em 
		Let $P, Q$ be points in $\mathbb{B}^2$ and $r\geq1$ be a real number, then
		\[d_F (P, Q) = \ln r \iff\left\Vert \dfrac{P}{r}-Q\right\Vert=\dfrac{r-1}{r},\]
		where $\Vert\cdot \Vert$ denotes the usual Euclidean norm.
	}
\end{theorem}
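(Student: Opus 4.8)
The plan is to start from the explicit formula \eqref{eq1} and reduce the equation $d_F(P,Q)=\ln r$ to the circle equation by clearing the logarithm and then the radical. Write
\[S:=\sqrt{\langle P,Q-P\rangle^2+(1-\Vert P\Vert^2)\Vert Q-P\Vert^2}\]
for the radical common to the numerator and denominator of \eqref{eq1}. Since that denominator is positive (this is part of the validity of \eqref{eq1}), the equation $d_F(P,Q)=\ln r$ is equivalent to the argument of the logarithm equalling $r$, and cross-multiplying turns this into the \emph{linear} relation $(1-r)S=\langle P-rQ,\,Q-P\rangle$. Abbreviate the right-hand side by $A=(1+r)\langle P,Q\rangle-\Vert P\Vert^2-r\Vert Q\Vert^2$, and introduce $B:=\langle P,Q\rangle-r\Vert Q\Vert^2+(r-1)$ together with $T:=\Vert P-rQ\Vert^2-(r-1)^2$. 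Because $r\ge1$, the target condition $\Vert P/r-Q\Vert=(r-1)/r$ is equivalent to $\Vert P-rQ\Vert=r-1$, hence to $T=0$; so the whole statement becomes ``$(1-r)S=A\iff T=0$''.

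The next step is to record two polynomial identities, checked by direct expansion (using $\Vert Q-P\Vert^2=\Vert P\Vert^2-2\langle P,Q\rangle+\Vert Q\Vert^2$ to simplify $S^2$):
\[A=(1-r)B-T,\qquad S^2=B^2+(1-\Vert Q\Vert^2)\,T.\]
For the implication $T=0\Rightarrow d_F(P,Q)=\ln r$ these give $A=(1-r)B$ and $S^2=B^2$, so $S=|B|$. If $r=1$ the relation $(1-r)S=A$ reads $0=0$ (and indeed $T=0$ forces $P=Q$), so it remains only to treat $r>1$, where everything comes down to fixing the sign, namely to proving $B\ge0$: this yields $S=B$ and hence $(1-r)S=(1-r)B=A$, the desired linear relation.

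The sign determination is the crux. Rearranging $T=0$ exhibits it as the quadratic
\[f(r):=(1-\Vert Q\Vert^2)r^2-2(1-\langle P,Q\rangle)r+(1-\Vert P\Vert^2),\]
in fact $T=-f(r)$. Its leading coefficient is positive and $f(1)=2\langle P,Q\rangle-\Vert P\Vert^2-\Vert Q\Vert^2=-\Vert P-Q\Vert^2\le0$, so the two roots $r_-\le r_+$ satisfy $r_-\le1\le r_+$; thus a root with $r\ge1$ is necessarily the larger root $r_+$. Since the product of the roots is $(1-\Vert P\Vert^2)/(1-\Vert Q\Vert^2)$, substituting $T=0$ to eliminate $\langle P,Q\rangle$ from $2rB$ gives $2rB=(1-\Vert Q\Vert^2)r^2-(1-\Vert P\Vert^2)=(1-\Vert Q\Vert^2)\,r_+(r_+-r_-)\ge0$, which is exactly $B\ge0$. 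This completes the backward implication.

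For the forward implication I would avoid re-squaring and argue by uniqueness instead. The root-location argument shows $f$ always has a root $r_+\ge1$, and by the backward implication $d_F(P,Q)=\ln r_+$. Hence, if $d_F(P,Q)=\ln r$ with $r\ge1$, injectivity of the logarithm forces $r=r_+$, so $f(r)=0$, i.e. $T=0$, which is the circle equation. The main obstacle, as indicated, is precisely this sign bookkeeping: the identities alone only yield $S=|B|$ and admit the spurious smaller root $r_-\le1$ (for which one checks $B\le0$), and it is the single inequality $f(1)=-\Vert P-Q\Vert^2\le0$ that selects the geometrically correct larger root and forces $B\ge0$.
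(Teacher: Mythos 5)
Your proof is correct, but it takes a genuinely different route from the paper's. Note first that the paper never proves Theorem \ref{maintheoremPF} directly (it is quoted from \cite{Chavez2024}); its own argument is the proof of the generalization, Theorem \ref{maintheorem}, which gives this statement at $\lambda=1$. That proof runs in the opposite direction and as a single chain of equivalences: starting from the circle equation $\left\Vert P/r-Q\right\Vert=(r-1)/(\lambda r)$, it expands, multiplies by $(r-1)^2\Vert Q-P\Vert^2$ to reach a squared identity, verifies the positivity of $\langle rQ-P,Q-P\rangle$ via Cauchy--Schwarz and $\Vert Q\Vert<1/\lambda$ so that square roots can be taken reversibly, and finally solves for $r$, recognizing the result as the argument of the logarithm in the distance formula. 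You instead reduce everything to polynomial data $A,B,T,S^2$, check two identities, prove only the backward implication directly --- with the sign $B\ge 0$ obtained by root location of the quadratic $f$ (positive leading coefficient, $f(1)=-\Vert P-Q\Vert^2\le 0$, product of roots) --- and get the forward implication by uniqueness and injectivity of the logarithm rather than by re-squaring. The two sign checks are secretly the same quantity: under $T=0$ one has $\langle rQ-P,Q-P\rangle=(r-1)B$, but you pin the sign down algebraically through the discriminant structure of $f$, whereas the paper does it geometrically through $Q$ lying in the disk. What your route buys is that the uniqueness argument cleanly sidesteps the reversibility of the radical-clearing step, which is exactly where the paper's chain is most delicate (its positivity check invokes the circle equation while inside the iff chain); what it costs is more bookkeeping, plus one line you should add: in the backward direction with $r>1$ you apply \eqref{eq1}, which requires $P\neq Q$, and this does hold because $P=Q$ would force $T=(r-1)^2\left(\Vert P\Vert^2-1\right)<0$, contradicting $T=0$.
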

\section{Zermelo navigation problem}
It is worth noting that any Zermelo navigation problem in $ \Omega \subset \mathbb{R}^2 $ (including in broader domains like differentiable manifolds) results in a Randers metric on $ \Omega $ (or on larger domains). Furthermore, every Randers metric originates from a navigation problem (see Chapter 2 in \cite{ChengShen2012}).

In this section, we will explore Randers metrics derived from the Zermelo navigation problem modeled on an open subset $ \Omega $ of $ \mathbb{R}^2 $.

Suppose that a boat is pushed by an internal force (like a motor force) with the velocity vector $U_x$ of constant length, $||U_x||=1$.
Without an external velocity vector, the shortest paths are straight lines. In this case, the length of a straight line segment corresponds exactly to the travel time of the boat. Specifically, if $c:[0,t_0]\rightarrow \mathbb{R}^2$ is the position vector of the boat, such that $c'(t)=U_{c(t)}$ (a unit velocity vector), then:
\[\int_0^t \Vert c'(\tau)\Vert d\tau = \int_0^t 1d\tau = t.\]

Now, suppose that there exists an external velocity vector $ W_x $, like generated by the wind, with $\Vert W_x \Vert < 1$. This condition ensures that the boat can move in all directions. The combination of the two velocity vectors on the object, $ T_x = U_x + W_x $, gives the direction and speed of the object at the point $ x \in \Omega$. Once the internal velocity vector $ U_x $ with $\Vert U_x \Vert = 1$ is chosen, we have:
\begin{align}\label{eq2}
	\Vert T_x - W_x \Vert = \Vert U_x \Vert = 1.
\end{align}

For any vector $ y_x \in \mathbb{R}^2 $, there exists (see Figure \ref{fig:existF}) a unique solution $ F = F(x, y_x) > 0 $ to the following equation:
\begin{align}\label{eq3}
	\left\Vert \frac{y_x}{F(x, y_x)} - W_x \right\Vert = 1.
\end{align}

\begin{figure}[H]
	\centering 
	\includegraphics[width=6cm, trim= 10mm 35mm 10mm 35mm,clip]{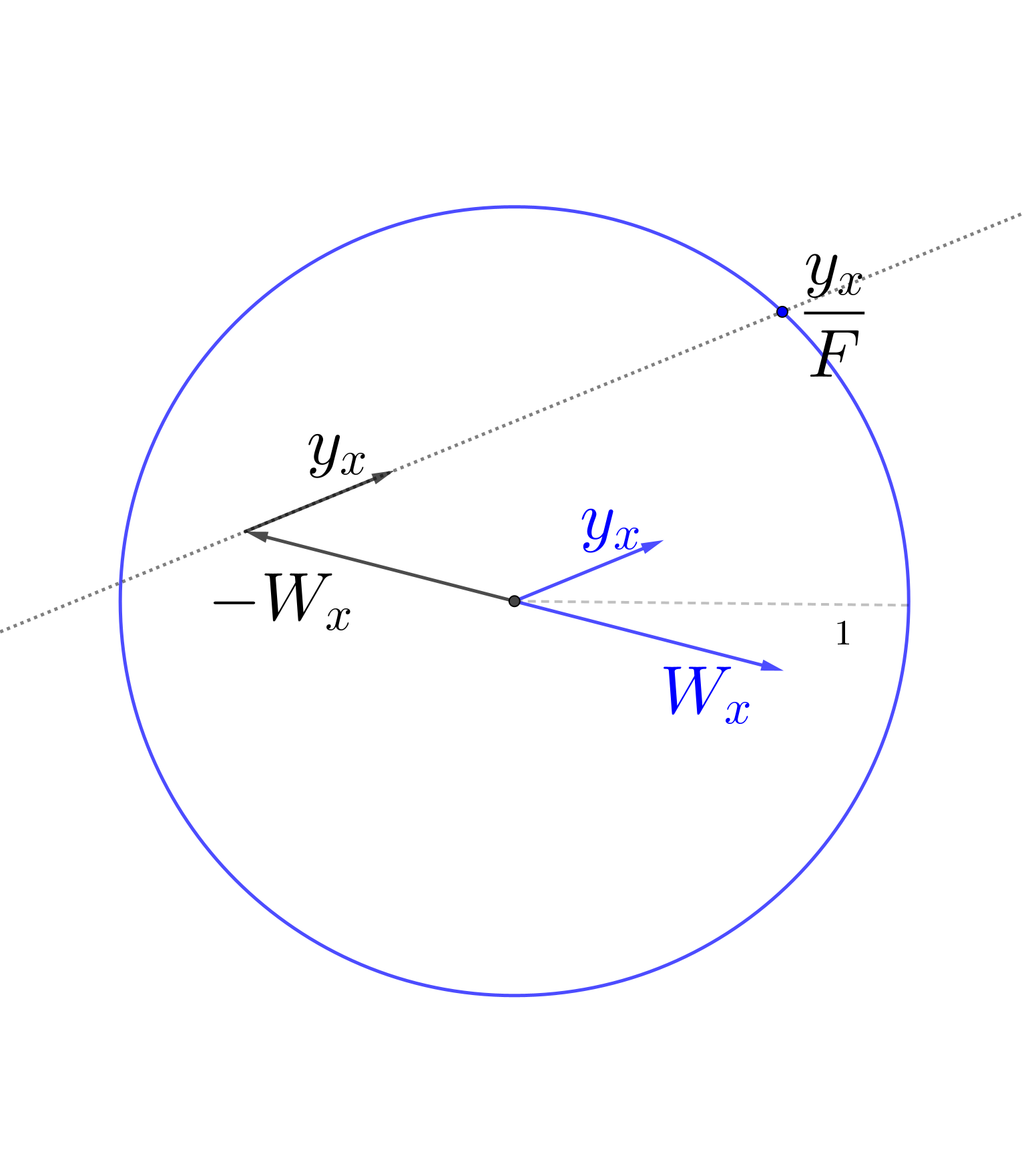} 
	\caption{Existence of $F$.}
	\label{fig:existF}
\end{figure}



By comparing \eqref{eq2} and \eqref{eq3}, we obtain:
\[ F(x, T_x) = 1. \]

If $ c:[0,t_0] \to \Omega $ is a smooth curve with $ c'(t) = T_{c(t)} $, then the arc length $\mathcal{L}_F(c)$ of $ c $ is equals to the travel time of the object along $ c $. Indeed:
\[ \mathcal{L}_F(c) = \int_0^{t_0} F(c(\tau), T_{c(\tau)}) d\tau = \int_0^{t_0} 1 d\tau = t_0. \]

Therefore, in the presence of an external force $ W_x $, the search for shortest paths is no longer in the Euclidean metric, but in the metric $ F $.

We will now derive an expression for the function $ F = F(x, y_x) $. To simplify notation, we will omit the subscript $ x $ from $ y_x $ and $ W_x $ going forward. Squaring both sides of Equation \eqref{eq3} and expanding the squared norm, we get:
\[ \frac{\Vert y \Vert^2}{F^2} - 2 \frac{\langle y, W \rangle}{F} + \Vert W \Vert^2 = 1. \]

Multiplying both sides of the above equality by $ F^2 $, we have:
\[ (1 - \Vert W \Vert^2) F^2 + 2 \langle y, W \rangle F - \Vert y \Vert^2 = 0. \]

This last equality is a quadratic equation, whose roots are given by:
\[ F = -\frac{\langle W, y \rangle}{1 - \Vert W \Vert^2} \pm \frac{\sqrt{\langle W, y \rangle^2 + \Vert y \Vert^2 (1 - \Vert W \Vert^2)}}{1 - \Vert W \Vert^2}. \]

Now, since:
\[ \sqrt{\langle W, y \rangle^2 + \Vert y \Vert^2 (1 - \Vert W \Vert^2)} \geq |\langle W, y \rangle| \geq \langle W, y \rangle. \]

Equality holds if and only if $ y = 0 $. Thus, there will always be one positive root and one non-positive root, ensuring that $ F > 0 $ for all $ y \neq 0 $. Thus, we obtain:
\begin{align}\label{eq4}
	F = \frac{\sqrt{\langle W, y \rangle^2 + \Vert y \Vert^2 (1 - \Vert W \Vert^2)}}{1 - \Vert W \Vert^2} - \frac{\langle W, y \rangle}{1 - \Vert W \Vert^2}.
\end{align}

Given a non-negative constant $\lambda$, we are going to define the $\lambda-$Funk metric considering $W=W_x=-\lambda(x_1,x_2)$ (see Figure \ref{fig:windcurrent}).

\begin{figure}[H]
	\centering 
	\includegraphics[width=7cm, trim= 100mm 170mm 100mm 180mm,clip]{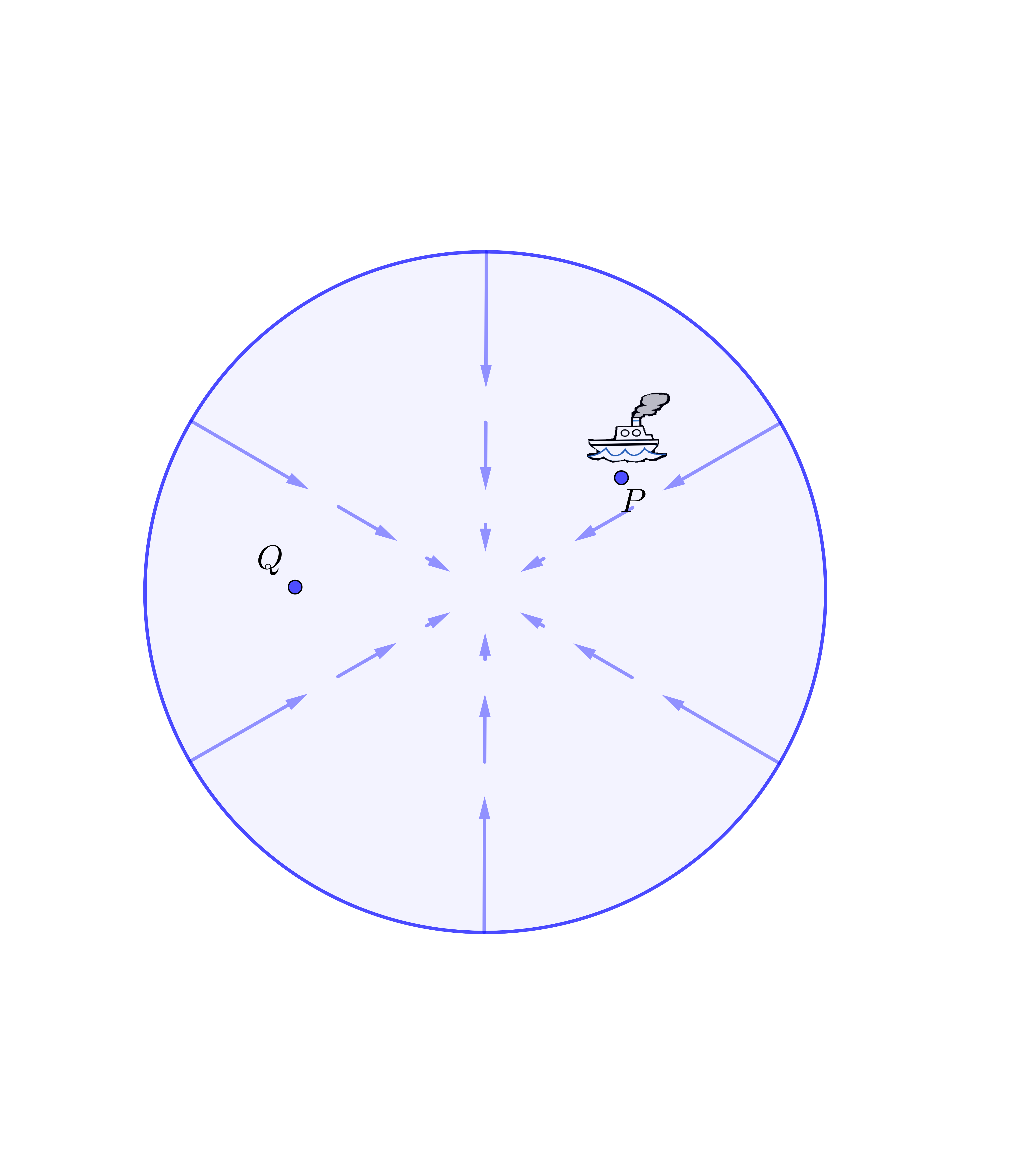} 
	\caption{Wind Current: $W_x=-\lambda x$ .}
	\label{fig:windcurrent}
\end{figure}

\begin{definition}\label{def:lambdafunk}
	The $\lambda-$Funk metric $F:\Omega_\lambda\times \mathbb{R}^n\to \mathbb{R}$ is defined by
	\begin{align}\label{eq5}
		F = \frac{\sqrt{\lambda^2\langle x, y \rangle^2 + \Vert y \Vert^2 (1 - \lambda^2\Vert x \Vert^2)}}{1 - \lambda^2\Vert x \Vert^2} + \frac{\lambda\langle x, y \rangle}{1 - \lambda^2\Vert x \Vert^2},
	\end{align}
	where   $\Omega_{\lambda} = \{ x \in \mathbb{R}^2; \| x \| < 1/{\lambda} \}$ for $\lambda> 0$, and $\Omega_0=\mathbb{R}^n$.
\end{definition}

\begin{remark}
	It is straightforward to prove that the function \eqref{eq5} satisfies all the properties of a Finsler metric in Definition \ref{def:Finsler}. And, note that $F$ in \eqref{eq5}, when $\lambda=0$ or $\lambda=1$ remain the classical Euclidean norm and the Funk metric in \eqref{FunkMetric} respectively.
\end{remark}

We recall the following result, which characterizes spherically symmetric Finsler metrics that are projectively flat.
\begin{theorem}[Theorem 5.1 in \cite{GuoMo2018}]\label{thmpflat}{\em 
		Let $ F = \Vert y\Vert\phi\left(\Vert x\Vert, \frac{\langle x, y\rangle}{\Vert y\Vert} \right) $ be a spherically symmetric Finsler metric in $ \mathbb{B}^n(\mu)=\{x\in\mathbb{R}^n;\;\Vert x\Vert<\mu\}$. Then, $ F $ is projectively flat if, and only if, $ \phi $ satisfies $ r \phi_{ss} - \phi_r + s\phi_{rs} = 0 $.}
\end{theorem}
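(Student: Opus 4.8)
The plan is to reduce projective flatness to a first-order differential criterion on $F$ and then to translate that criterion, via the chain rule, into a condition on $\phi$. I would invoke the classical Hamel (Rapcsák) characterization: a Finsler metric $F$ on a convex domain of $\mathbb{R}^n$ is projectively flat if and only if the \emph{Hamel condition}
\[ \sum_{k} F_{x_k y_l}\, y_k = F_{x_l} \]
holds for every index $l$, where the subscripts $x_k,\,y_l$ denote partial derivatives. This is the standard statement that all geodesics are straight lines, equivalently that the geodesic spray has the form $G^i = P\,y^i$. Thus the whole theorem follows once both sides of the Hamel condition are written in terms of $\phi$ and its derivatives $\phi_r,\phi_s,\phi_{rr},\phi_{rs},\phi_{ss}$, evaluated at $r=\Vert x\Vert$ and $s=\langle x,y\rangle/\Vert y\Vert$.

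To carry this out I would first record the elementary derivatives $\partial r/\partial x_k = x_k/r$, $\partial s/\partial x_k = y_k/\Vert y\Vert$, $\partial \Vert y\Vert/\partial y_l = y_l/\Vert y\Vert$, and $\partial s/\partial y_l = x_l/\Vert y\Vert - s\,y_l/\Vert y\Vert^{2}$. Applying the chain rule to $F=\Vert y\Vert\,\phi(r,s)$ then gives
\[ F_{x_l} = \Vert y\Vert\,\phi_r\,\frac{x_l}{r} + \phi_s\,y_l . \]
Differentiating the analogous expression for $F_{x_k}$ once more in $y_l$ produces $F_{x_k y_l}$ as a sum of terms proportional to $x_k x_l$, $x_k y_l$, $y_k x_l$, $y_k y_l$, and $\delta_{kl}$, each carrying a coefficient assembled from $\phi_r,\phi_{rs},\phi_{ss}$ and powers of $r,s,\Vert y\Vert$.

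The decisive step is the contraction $\sum_k F_{x_k y_l}\,y_k$. Here I would use the two identities $\sum_k x_k y_k = s\Vert y\Vert$ and $\sum_k y_k y_k = \Vert y\Vert^{2}$ to collapse the contracted expression into a combination of $x_l$ and $y_l$ alone. Subtracting $F_{x_l}$ and cancelling the common term $\phi_s y_l$, the Hamel condition becomes a vector identity $A\,x_l + B\,y_l = 0$ with scalar coefficients $A,B$ built from $\phi_r,\phi_{rs},\phi_{ss}$. Since $x$ and $y$ are generic, the vectors $(x_l)_l$ and $(y_l)_l$ are linearly independent, so the identity forces $A=0$ and $B=0$ separately, and every step so far is an equivalence (hence both directions of the theorem are obtained at once, by density for the non-generic configurations).

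The final observation, and the point requiring a little care, is that these two scalar equations are not independent. A direct computation shows that the coefficient of $x_l$ equals $(\Vert y\Vert/r)\,Q$ while the coefficient of $y_l$ equals $-(s/r)\,Q$, where $Q=r\phi_{ss}+s\phi_{rs}-\phi_r$. Since the prefactors $\Vert y\Vert/r$ and $s/r$ do not vanish identically, both coefficients vanish precisely when $Q=0$, which is exactly the asserted equation $r\phi_{ss}-\phi_r+s\phi_{rs}=0$. I expect the main obstacle to be purely organizational: keeping the five tensor types in $F_{x_k y_l}$ straight through the contraction so that $A$ and $B$ are identified correctly. The conceptual content lies entirely in the Hamel criterion and in recognizing that $A$ and $B$ are scalar multiples of the single quantity $Q$.
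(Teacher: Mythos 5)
Your proposal is correct, but note that there is nothing in the paper to compare it against: the paper states this result purely as a quotation (Theorem 5.1 of the cited Guo--Mo reference) and gives no proof of it. Your argument is, in fact, the standard proof used in that literature: reduce projective flatness to Hamel's criterion $\sum_k F_{x_k y_l}y_k=F_{x_l}$ (valid here because $\mathbb{B}^n(\mu)$ is convex), then compute. I verified the key computation: with $r=\Vert x\Vert$, $s=\langle x,y\rangle/\Vert y\Vert$ one gets $F_{x_l}=\frac{\Vert y\Vert}{r}\phi_r x_l+\phi_s y_l$ and
\begin{equation*}
\sum_k F_{x_k y_l}y_k-F_{x_l}=\frac{\Vert y\Vert}{r}\,Q\,x_l-\frac{s}{r}\,Q\,y_l,\qquad Q=r\phi_{ss}+s\phi_{rs}-\phi_r,
\end{equation*}
exactly the coefficients you claim, so the Hamel condition is equivalent to $Q\,(\Vert y\Vert x-s y)=0$; since $\Vert y\Vert x-sy\neq 0$ whenever $x$ and $y$ are linearly independent, density and continuity give $Q\equiv 0$, and the converse is immediate. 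The only points worth making explicit in a final write-up are the convexity hypothesis behind Hamel's criterion and the fact that the locus $x=0$ (where $1/r$ is undefined) and the locus where $x\parallel y$ are handled by continuity.
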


\begin{theorem}\label{theo:flat}
	The $\lambda-$Funk metric defined in \eqref{eq5} is projectively flat.
\end{theorem}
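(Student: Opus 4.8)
The plan is to invoke Theorem \ref{thmpflat}, which reduces projective flatness of a spherically symmetric Finsler metric to a single second-order PDE for its defining function $\phi$. The first step is to exhibit the $\lambda$-Funk metric \eqref{eq5} in the form $F = \Vert y\Vert\,\phi(r,s)$ with $r = \Vert x\Vert$ and $s = \langle x,y\rangle/\Vert y\Vert$. Substituting $\langle x,y\rangle = s\Vert y\Vert$ and using $\lambda^2\langle x,y\rangle^2 + \Vert y\Vert^2(1-\lambda^2\Vert x\Vert^2) = \Vert y\Vert^2\bigl(1-\lambda^2(r^2-s^2)\bigr)$ to pull $\Vert y\Vert$ out of the square root, one finds
\[ \phi(r,s) = \frac{\lambda s + \sqrt{1-\lambda^2(r^2-s^2)}}{1-\lambda^2 r^2}. \]
This display already shows that $F$ is spherically symmetric, so Theorem \ref{thmpflat} applies on $\mathbb{B}^2(1/\lambda) = \Omega_\lambda$; it then remains to verify the identity $r\phi_{ss} - \phi_r + s\phi_{rs} = 0$.

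The second step is the derivative computation, which I would organize around two structural abbreviations: set $u = 1-\lambda^2 r^2$ and $D = \sqrt{1-\lambda^2(r^2-s^2)}$, so that $\phi = (\lambda s + D)/u$ and every simplification is driven by the relations $u\phi = \lambda s + D$ and $D^2 = u + \lambda^2 s^2$. From $D_s = \lambda^2 s/D$ and $D_r = -\lambda^2 r/D$ one obtains the compact first derivatives
\[ \phi_s = \frac{\lambda\phi}{D}, \qquad \phi_r = \frac{\lambda^2 r\,(2\phi D - 1)}{uD}, \]
where $u\phi = \lambda s + D$ is exactly what collapses $\phi_s$. Differentiating once more gives
\[ \phi_{ss} = \frac{\lambda^2\phi\,(D - \lambda s)}{D^3}, \qquad \phi_{rs} = \frac{\lambda^3 r\,(2\phi D^2 - D + u\phi)}{uD^3}. \]

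The final step is to substitute into the PDE and simplify; this is the only genuinely computational point, and it collapses cleanly. Placing $r\phi_{ss} - \phi_r + s\phi_{rs}$ over the common denominator $uD^3$ and factoring out $\lambda^2 r$, the two $\lambda s\,u\phi$ terms cancel, and after replacing $u\phi D$ by $(\lambda s + D)D$ the bracketed numerator reduces to $2D^2\bigl[\,1 - \phi(D-\lambda s)\,\bigr]$. The crux is then the single identity
\[ \phi\,(D - \lambda s) = \frac{(\lambda s + D)(D - \lambda s)}{u} = \frac{D^2 - \lambda^2 s^2}{u} = \frac{u}{u} = 1, \]
which uses $D^2 - \lambda^2 s^2 = u$. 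Hence the bracket vanishes identically, the equation of Theorem \ref{thmpflat} holds throughout $\Omega_\lambda$, and $F$ is projectively flat. The main obstacle is purely the bookkeeping in the second-order derivatives; recognizing the two identities $u\phi = \lambda s + D$ and $D^2 = u + \lambda^2 s^2$ early is what keeps that computation from ballooning.
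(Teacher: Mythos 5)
Your proposal is correct and takes essentially the same approach as the paper: both write $F=\Vert y\Vert\,\phi(r,s)$ with the same $\phi$ and verify the criterion $r\phi_{ss}-\phi_r+s\phi_{rs}=0$ of Theorem \ref{thmpflat}. The only difference is organizational: your abbreviations $u=1-\lambda^2r^2$, $D=\sqrt{1-\lambda^2(r^2-s^2)}$ and the identities $u\phi=\lambda s+D$, $D^2=u+\lambda^2s^2$ compress the derivative bookkeeping that the paper carries out by direct expansion, and your intermediate formulas (e.g.\ $\phi_{ss}=\lambda^2\phi(D-\lambda s)/D^3$, which equals the paper's $\lambda^2/D^3$) are consistent with the paper's equations \eqref{eq7}--\eqref{eq10}.
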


\begin{proof}
	Note that  \eqref{eq5} can be rewritten as $F = \Vert y \Vert \phi\left(\Vert x \Vert, \frac{\langle x, y \rangle}{\Vert y \Vert}\right)$ where
	\begin{equation}\label{eq6}
		\phi(r,s) = \dfrac{\sqrt{1+\lambda^2(s^2-r^2)}}{1-\lambda^2r^2} + \dfrac{\lambda s}{1-\lambda^2r^2}.    
	\end{equation}
	By differentiating $\phi$ with respect to $r$ and $s$ in \eqref{eq6}, we obtain
	\begin{equation}\label{eq7}
		\phi_r = \dfrac{\lambda^2 r (1 + \lambda^2 [2s^2 - r^2])}{(1 - \lambda^2 r^2)^2 (1 + \lambda^2 [s^2 - r^2])^{1/2}} + \dfrac{2\lambda^3 sr}{(1 - \lambda^2 r^2)^2}
	\end{equation}
	and
	\begin{equation}\label{eq8}
		\phi_s = \dfrac{\lambda^2 s}{(1 - \lambda^2 r^2)(1 + \lambda^2 [s^2 - r^2])^{1/2}} + \dfrac{\lambda}{1 - \lambda^2 r^2}.    
	\end{equation}
	Now, by differentiating \eqref{eq7} and \eqref{eq8} with respect to $s$, we have
	\begin{equation}\label{eq9}
		\phi_{rs} = \dfrac{\lambda^4 rs (3 + \lambda^2 [2s^2 - 3r^2])}{(1 - \lambda^2 r^2)^2 (1 + \lambda^2 [s^2 - r^2])^{3/2}} + \dfrac{2\lambda^3 r}{(1 - \lambda^2 r^2)^2}    
	\end{equation}
	and
	\begin{equation}\label{eq10}
		\phi_{ss} = \dfrac{\lambda^2}{(1 + \lambda^2 [s^2 - r^2])^{3/2}}.  
	\end{equation}
	Thus, using \eqref{eq7}, \eqref{eq9}, and \eqref{eq10}, we obtain
	\begin{align*}
		r\phi_{ss}-\phi_r+s\phi_{rs}&=\dfrac{\lambda^2r}{(1 + \lambda^2 [s^2 - r^2])^{3/2}}-\dfrac{\lambda^2 r (1 + \lambda^2 [2s^2 - r^2])}{(1 - \lambda^2 r^2)^2 (1 + \lambda^2 [s^2 - r^2])^{1/2}} - \dfrac{2\lambda^3 sr}{(1 - \lambda^2 r^2)^2}\\ &\;\;\;\;+\dfrac{\lambda^4 rs^2 (3 + \lambda^2 [2s^2 - 3r^2])}{(1 - \lambda^2 r^2)^2 (1 + \lambda^2 [s^2 - r^2])^{3/2}} + \dfrac{2\lambda^3 rs}{(1 - \lambda^2 r^2)^2}\\&=\dfrac{\lambda^2r\left\{(1-\lambda^2r^2)^2-(1+\lambda^2[2s^2-r^2])(1+\lambda^2[s^2-r^2])+\lambda^2 s^2 (3 + \lambda^2 [2s^2 - 3r^2])\right\}}{(1 - \lambda^2 r^2)^2 (1 + \lambda^2 [s^2 - r^2])^{3/2}}\\&=\dfrac{\lambda^2r\left\{1+\lambda^2[3s^2-2r^2]+\lambda^4[2s^4-3r^2s^2+r^4]-(1+\lambda^2[2s^2-r^2])(1+\lambda^2[s^2-r^2])\right\}}{(1 - \lambda^2 r^2)^2 (1 + \lambda^2 [s^2 - r^2])^{3/2}}\\&=0.
	\end{align*}
	Therefore, by Theorem \ref{thmpflat}, we have that $\lambda-$Funk metric given by \eqref{eq5} is projectively flat on $\Omega_\lambda$.
\end{proof}

\section{Induced Distance}
Inspired by \cite{Chavez2021} we obtain the distance from $P$ to $Q$ denoted by $d_F(P,Q)$. 

\begin{theorem}\label{theo:dPQ}
	Let $\lambda>0$, $\Omega_\lambda=\mathbb{B}^2(1/\lambda)$. Given $P,Q\in \Omega_\lambda$, then, the distance (or traveling time) induced by the $\lambda-$Funk metric, for $P\neq Q$, is given by:
	\begin{align}\label{eq29}
		d_F(P,Q)&=\dfrac{1}{\lambda}\ln\left(\frac{\sqrt{\lambda^2\langle P, Q-P \rangle^2+(1-\lambda^2\Vert P\Vert^2)\Vert Q-P\Vert^2}-\lambda\langle P, Q-P \rangle}{\sqrt{\lambda^2\langle P, Q-P\rangle^2+(1-\lambda^2\Vert P\Vert^2)\Vert Q-P\Vert^2}-\lambda\langle Q, Q-P \rangle}
		\right),
	\end{align}
	where $\langle\cdot,\cdot\rangle$ and $\Vert \cdot \Vert$ are the usual inner product and the usual Euclidean norm, respectively, and $d_F(P,P)=0$.
\end{theorem}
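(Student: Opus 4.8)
My plan starts from Theorem~\ref{theo:flat}: since the $\lambda$-Funk metric is projectively flat, its geodesics are straight lines, so the shortest path from $P$ to $Q$ is the Euclidean segment. Parametrizing it by $c(t)=P+t(Q-P)$, $t\in[0,1]$, with $c'(t)=Q-P$, Definition~\ref{def:arc} reduces the problem to the single integral $d_F(P,Q)=\int_0^1 F\big(c(t),\,Q-P\big)\,dt$, where $F$ is given by \eqref{eq5}.

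The key observation that makes this integral tractable is the following. Writing $A=\langle P,\,Q-P\rangle$ and $B=\Vert Q-P\Vert^2$, one has $\langle c(t),\,Q-P\rangle=A+tB$ and $\Vert c(t)\Vert^2=\Vert P\Vert^2+2tA+t^2B$. Substituting these into the radicand of \eqref{eq5}, the terms linear and quadratic in $t$ cancel and it collapses to the constant $D^2:=\lambda^2A^2+(1-\lambda^2\Vert P\Vert^2)B$ — precisely the quantity under the square roots in \eqref{eq29}. Hence, writing $g(t):=1-\lambda^2\Vert c(t)\Vert^2$, the integrand becomes $\dfrac{D+\lambda(A+tB)}{g(t)}$.

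The next step is to integrate. Since $g'(t)=-2\lambda^2(A+tB)$, we have $\lambda(A+tB)=-g'(t)/(2\lambda)$, so the integrand splits as $\dfrac{D}{g(t)}-\dfrac{g'(t)}{2\lambda\,g(t)}$. The second piece integrates immediately to $-\frac{1}{2\lambda}\ln\!\big(g(1)/g(0)\big)$; using $\Vert c(1)\Vert=\Vert Q\Vert$ this equals $-\frac{1}{2\lambda}\ln\frac{1-\lambda^2\Vert Q\Vert^2}{1-\lambda^2\Vert P\Vert^2}$. For the first piece I factor the quadratic $g(t)=-\lambda^2 B\,(t-t_+)(t-t_-)$, whose discriminant equals $\lambda^2D^2$, so $t_\pm=(-\lambda A\pm D)/(\lambda B)$, and evaluate $\int_0^1 D/g(t)\,dt$ by partial fractions, obtaining a difference of logarithms.

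The main obstacle is the sign bookkeeping inside these logarithms together with the final algebraic collapse. Setting $A'=\langle Q,\,Q-P\rangle$, one checks $D>\lambda|A|$ and $D>\lambda|A'|$ — the latter from the identity $D^2-\lambda^2A'^2=(1-\lambda^2\Vert Q\Vert^2)B>0$, valid because $\Vert Q\Vert<1/\lambda$ — which pins down the signs of $t_\pm$ and $1-t_\pm$ and removes the absolute values. Combining the two logarithmic contributions and simplifying with the factorizations $(D-\lambda A)(D+\lambda A)=B\,g(0)$ and $(D-\lambda A')(D+\lambda A')=B\,g(1)$ makes the extra factors cancel in pairs, leaving exactly $d_F(P,Q)=\frac{1}{\lambda}\ln\frac{D-\lambda A}{D-\lambda A'}$, which is \eqref{eq29}. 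A slicker alternative avoids the integral altogether: the scaling identity $F_\lambda(x,y)=\frac{1}{\lambda}F_1(\lambda x,\lambda y)$, where $F_1$ is the Funk metric of \eqref{FunkMetric}, yields $d_{F_\lambda}(P,Q)=\frac{1}{\lambda}\,d_{F_1}(\lambda P,\lambda Q)$; substituting $\lambda P,\lambda Q$ into the known Funk formula \eqref{eq1} and factoring $\lambda$ out of numerator and denominator reproduces \eqref{eq29} directly.
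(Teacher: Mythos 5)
Your main argument is essentially the paper's own proof: both take the straight segment justified by Theorem~\ref{theo:flat}, observe that the radicand in \eqref{eq5} collapses to the constant $D^2=\lambda^2\langle P,Q-P\rangle^2+(1-\lambda^2\Vert P\Vert^2)\Vert Q-P\Vert^2$ along the segment, split the integrand into the constant-over-quadratic piece (handled by partial fractions after factoring the denominator with roots $t_\pm$) and the logarithmic-derivative piece, and then fix the signs to remove absolute values. Your sign bookkeeping is in fact slightly cleaner than the paper's: you get $D>\lambda|\langle Q,Q-P\rangle|$ directly from the identity $D^2-\lambda^2\langle Q,Q-P\rangle^2=(1-\lambda^2\Vert Q\Vert^2)\Vert Q-P\Vert^2$, whereas the paper proves the equivalent claim $t_2>1$ by a two-case analysis on the sign of $\langle P,Q-P\rangle+\Vert Q-P\Vert^2$. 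The alternative you sketch at the end is genuinely different from anything in the paper and is correct: the scaling identity $F_\lambda(x,y)=\tfrac{1}{\lambda}F_1(\lambda x,\lambda y)$ gives $d_{F_\lambda}(P,Q)=\tfrac{1}{\lambda}d_{F_1}(\lambda P,\lambda Q)$, and substituting into \eqref{eq1} reproduces \eqref{eq29} after the factors of $\lambda$ cancel; this trades the entire integration for dependence on the previously published Funk formula \eqref{eq1}, which the paper chooses to re-derive from scratch rather than cite as the engine of the proof.
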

\begin{proof}
	Let $P, Q \in \Omega_\lambda \subset \mathbb{R}^2$ be distinct points. By Theorem \ref{theo:flat} we consider the parametric curve $c:[0,1]\rightarrow \Omega_\lambda$ defined by:
	\begin{equation}\label{eq11}
		\begin{array}{rl}
			c(t) &= tQ + (1-t)P
		\end{array}
	\end{equation}
	which connects the points $P$ and $Q$. Note that $c(0)=P$ and $c(1)=Q$. Differentiating \eqref{eq11} with respect to $t$:
	\begin{equation}\label{eq12}
		c^\prime(t) = Q - P
	\end{equation}
	By the Definition \ref{def:arc}, the ``$\lambda-$Funk'' distance from $P$ to $Q$, denoted by $d_F(P,Q)$, is given by:
	\begin{equation}\label{eq13}
		d_F(P,Q)= \mathscr{L}_F(c)= \displaystyle\int_0^1 F(c(t), c'(t)) \, dt
	\end{equation}
	where $F$ is given by \eqref{eq5}. Substituting \eqref{eq11} and \eqref{eq12} into \eqref{eq13}, we have:
	\begin{align*}
		d_F(P,Q) = d_F(P,Q)_1 + d_F(P,Q)_2
	\end{align*}
	where
	\begin{equation}\label{eq14}
		d_F(P,Q)_1 = \int_0^1 \frac{\sqrt{\Vert Q-P \Vert^2 (1 - \lambda^2\Vert P \Vert^2) +\lambda^2 \langle P, Q-P \rangle^2}}{(1 - \lambda^2\Vert P \Vert^2) - 2\lambda^2 \langle P, Q-P \rangle t - \lambda^2\Vert Q-P \Vert^2 t^2} \, dt
	\end{equation}
	and
	\begin{equation}\label{eq15}
		d_F(P,Q)_2 = \int_0^1 \frac{\lambda\Vert Q-P \Vert^2 t + \lambda\langle P, Q-P \rangle}{(1 - \lambda^2\Vert P \Vert^2) - 2\lambda^2 \langle P, Q-P \rangle t - \lambda^2\Vert Q-P \Vert^2 t^2} \, dt.
	\end{equation}
	We define $k$ as follows to simplify the notation:
	\begin{align}\label{eq16}
		k = \Vert Q-P \Vert^2 (1 - \lambda^2\Vert P \Vert^2) + \lambda^2\langle P, Q-P \rangle^2.
	\end{align}
	Note:
	\[
	(1 - \lambda^2\Vert P \Vert^2) - 2 \lambda^2\langle P, Q-P \rangle t - \lambda^2\Vert Q-P \rVert^2 t^2 = -\lambda^2\Vert Q-P \Vert^2 \left[t^2 + 2 \frac{\langle P, Q-P \rangle}{\Vert Q-P \Vert^2} t - \frac{1 - \lambda^2\Vert P \Vert^2}{\lambda^2\Vert Q-P \Vert^2} \right].
	\]
	Now, factorizing the right-hand side of the above equation, we obtain:
	\begin{equation}\label{eq17}
		(1 - \lambda^2\Vert P \Vert^2) - 2 \lambda^2 \langle P, Q-P \rangle t -  \lambda^2\Vert Q-P \rVert^2 t^2 = -\lambda^2\Vert Q-P \Vert^2 (t - t_1)(t - t_2)
	\end{equation}
	where
	\begin{equation}\label{eq18}
		t_1 = \frac{- \lambda\langle P, Q-P \rangle-\sqrt{k}}{\lambda\Vert Q-P \rVert^2}
	\end{equation}
	and
	\begin{equation}\label{eq19}
		t_2 = \frac{- \lambda\langle P, Q-P \rangle+\sqrt{k}}{\lambda\Vert Q-P \rVert^2}.
	\end{equation}
	Substituting \eqref{eq16} and \eqref{eq17} into \eqref{eq14}, we have:
	\begin{align*}
		d_F(P,Q)_1 = -\dfrac{\sqrt{k}}{\lambda^2\Vert Q-P \rVert^2} \int_0^1 \frac{1}{(t - t_1)(t - t_2)} \, dt.
	\end{align*}
	This time, using the method of partial fractions, we have:
	\begin{align*}
		d_F(P,Q)_1 = -\dfrac{\sqrt{k}}{\lambda^2\Vert Q-P \rVert^2} \int_0^1 \left[ \frac{1}{(t_1 - t_2)(t - t_1)} - \frac{1}{(t_1 - t_2)(t - t_2)} \right] \, dt.
	\end{align*}
	Therefore, integrating, we obtain: 
	\begin{equation}\label{eq20}
		\begin{array}{rl}
			d(P,Q)_1 &= -\dfrac{\sqrt{k}}{\lambda^2\Vert Q-P \rVert^2} \left[ \dfrac{\ln |t - t_1|}{t_1 - t_2} - \dfrac{\ln |t - t_2|}{t_1 - t_2} \right]_0^1 \\ \\
			&= -\dfrac{\sqrt{k}}{\lambda^2\Vert Q-P \rVert^2} \dfrac{1}{(t_1 - t_2)} \left[ \ln \left| \dfrac{t - t_1}{t - t_2} \right| \right]_0^1 \\ \\
			&= -\dfrac{\sqrt{k}}{\lambda^2\Vert Q-P \rVert^2} \dfrac{1}{(t_1 - t_2)} \ln \left| \dfrac{t_2 (1 - t_1)}{t_1 (1 - t_2)} \right|.
		\end{array}
	\end{equation}
	Note that, by \eqref{eq18} and \eqref{eq19}, we have: 
	\begin{equation}\label{eq21}
		\dfrac{1}{t_1 - t_2} = -\dfrac{\lambda\Vert Q-P \rVert^2}{2 \sqrt{k}}.
	\end{equation} 
	Thus, substituting \eqref{eq21} into \eqref{eq20}, we have:
	\begin{equation}\label{eq22}
		d_F(P,Q)_1 = \dfrac{1}{2\lambda} \ln \left| \dfrac{t_2 (1 - t_1)}{t_1 (1 - t_2)} \right|.
	\end{equation}
	On the other hand, note 
	\begin{equation}\label{eq23}
		[(1 - \lambda^2\Vert P \rVert^2) - 2 \lambda^2\langle P, Q-P \rangle t - \lambda^2\Vert Q-P \Vert^2 t^2]' = -2 \lambda^2[\Vert Q-P \rVert^2 t + \langle P, Q-P \rangle].
	\end{equation}
	Thus, substituting \eqref{eq23} into \eqref{eq15}, we have:
	\begin{align*}
		d_F(P,Q)_2 = -\dfrac{1}{2\lambda} \int_0^1 \frac{[(1 -\lambda^2 \Vert P \rVert^2) - 2\lambda^2 \langle P, Q-P \rangle t - \lambda^2\Vert Q-P \rVert^2 t^2]'}{(1 - \lambda^2\Vert P \rVert^2) - 2\lambda^2 \langle P, Q-P \rangle t - \lambda^2\Vert Q-P \rVert^2 t^2} \, dt.
	\end{align*}
	By the Fundamental Theorem of Calculus, we obtain:
	\begin{align}\label{eq24}
		d_F(P,Q)_2 = -\frac{1}{2\lambda} \left[ \ln \left| (1 -\lambda^2 \Vert P \rVert^2) - 2\lambda^2 \langle P, Q-P \rangle t - \lambda^2\Vert Q-P \rVert^2 t^2 \right| \right]_0^1.
	\end{align}
	Thus, substituting \eqref{eq17} into \eqref{eq24}, we have:
	\[
	d_F(P,Q)_2=-\frac{1}{2\lambda}\left[\ln\left|-\lambda^2\Vert Q-P \Vert^2(t-t_1)(t-t_2)\right|\right]^1_0\]
	or, equivalently, we obtain:
	\begin{align}\label{eq25}
		d_F(P,Q)_2 = -\frac{1}{2\lambda} \ln \left| \frac{(1 - t_1)(1 - t_2)}{t_1 t_2} \right|.
	\end{align}
	Substituting \eqref{eq21} and \eqref{eq25} into \eqref{eq13}, we have:
	\begin{align}\label{eq26}
		\nonumber d_F(P,Q) &= \dfrac{1}{2\lambda} \ln \left| \dfrac{t_2 (1 - t_1)}{t_1 (1 - t_2)} \right| - \dfrac{1}{2\lambda} \ln \left| \dfrac{(1 - t_1)(1 - t_2)}{t_1 t_2} \right| \\
		&= \dfrac{1}{2\lambda} \ln \left( \dfrac{t_2^2}{(1 - t_2)^2} \right) \\\nonumber
		&=\dfrac{1}{\lambda} \ln \left| \dfrac{t_2}{t_2 - 1} \right|.
	\end{align}
	\begin{claim}\label{AF}{\em  $t_2>1$.}
	\end{claim}
	In fact, note that, from the definition of $t_2$ in \eqref{eq19}, we have $t_2 > 1$ if and only if 
	\begin{align}\label{eq27}
		\sqrt{\lambda^2\langle P, Q-P \rangle^2 + (1 - \lambda^2\Vert P \rVert^2) \Vert Q-P \rVert^2} > \lambda(\langle P, Q-P \rangle + \Vert Q-P \rVert^2).
	\end{align} 
	Now, two situations arise:
	\begin{itemize}
		\item If $\langle P, Q-P \rangle + \Vert Q-P \rVert^2 < 0$, then \eqref{eq27} is clearly true.
		\item If $\langle P, Q-P \rangle + \Vert Q-P \rVert^2 \geq 0$, then \eqref{eq27} is true if and only if  
		\begin{align*}
			\lambda^2\langle P, Q-P \rangle^2 + (1 - \lambda^2\Vert P \rVert^2) \Vert Q-P \rVert^2 &> \lambda^2\left(\langle P, Q-P \rangle^2 + 2 \langle P, Q-P \rangle \Vert Q-P \rVert^2 + \Vert Q-P \rVert^4\right) \\
			\Leftrightarrow (1 -\lambda^2 \Vert P \rVert^2) \Vert Q-P \rVert^2 &> 2 \lambda^2\langle P, Q-P \rangle \Vert Q-P \rVert^2 +\lambda^2 \Vert Q-P \rVert^4
		\end{align*}
		Since $P \neq Q$, dividing both sides of the above inequality by $\Vert Q-P \rVert^2$, we have:  
		\[
		\begin{array}{rcl}  
			1 - \lambda^2\Vert P \rVert^2 > 2\lambda^2 \langle P, Q-P \rangle +\lambda^2 \Vert Q-P \rVert^2 &\Leftrightarrow& 1 > \lambda^2\left(\Vert P \rVert^2 + 2 \langle P, Q-P \rangle + \Vert Q-P \rVert^2\right) \\ 
			&\Leftrightarrow& 1 > \lambda^2\Vert P + (Q-P) \rVert^2 \\ 
			&\Leftrightarrow& \dfrac{1}{\lambda^2} > \Vert Q \rVert^2,
		\end{array}
		\]
		which is also true.
	\end{itemize}
	Therefore, in either case, \eqref{eq27} is true. This concludes the proof of the claim.
	Using Claim \ref{AF} in \eqref{eq26}, we have that the distance from $P$ to $Q$ is given by:
	\begin{align}\label{eq28}
		d_F(P,Q) = \dfrac{1}{\lambda}\ln \left( \frac{t_2}{t_2 - 1} \right).
	\end{align}
	Replacing \eqref{eq19} in \eqref{eq28}, and using \[\langle P, Q-P\rangle+\Vert Q-P\Vert^2=\langle Q, Q-P\rangle\] we obtain the result.
\end{proof}
\begin{remark}{For $\lambda\neq 0$, we have \begin{enumerate}
			\item  $d_F$ is not symmetric. In fact, consider $O$ as the origin and $P$ any point in $\Omega_\lambda$ distinct from $O$. Thus, we observe:
			\[d_F(O,P)=-\dfrac{1}{\lambda}\ln(1-\lambda\Vert P\Vert)\neq \dfrac{1}{\lambda}\ln(1+\lambda\Vert P\Vert)=d_F(P,O).\] 
			Theorem \ref{maintheorem}, below, provides a better visualization of this asymmetry.
			\item $d_F$ is not invariant under translations. In fact, consider $T:\Omega_\lambda\to \Omega_\lambda$ given by $T(x)=x+P_0$ where $P_0\in \Omega_\lambda\setminus\{O\}$. Note:
			\[d_F(O,-P_0)=d_F(O,P_0)\neq d_F(P_0,O)=d_F(T(O),T(-P_0)).\]
			\item $d_F$ is invariant under rotations. It suffices to note that the Euclidean inner product and the Euclidean norm are invariant under rotations.
	\end{enumerate}}
\end{remark}

\section[geometryofdistance]{Geometry of the $\lambda-$Funk metric on $\Omega_\lambda$}
In this section, we will examine some geometric properties of basic geometry, such as the distance from one point to another, the distance from a point to a line, and the study of the circle, using the distance obtained in \eqref{eq29} induced by the $\lambda-$Funk metric \eqref{eq5}.
\begin{theorem}\label{maintheorem}{\em 
		Let $P,\;Q$ be points in $\Omega_\lambda$ ($\lambda> 0$) and $r\geq1$ a real number, then
		\begin{equation}\label{eq30}
			d_F (P, Q) =\dfrac{1}{\lambda} \ln r \iff\left\Vert \dfrac{P}{r}-Q\right\Vert=\dfrac{1}{\lambda}\dfrac{r-1}{r}.
		\end{equation}
	}
\end{theorem}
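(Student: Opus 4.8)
The plan is to strip the logarithm from the left-hand side and reduce both conditions in \eqref{eq30} to one and the same polynomial identity in the Euclidean invariants $\langle P,Q-P\rangle$, $\Vert Q-P\Vert^2$, and $\Vert P\Vert^2$. Before doing so, note that $d_F(P,P)=0=\frac1\lambda\ln 1$ and $\Vert P/1-P\Vert=0=\frac1\lambda\frac{1-1}{1}$, so the value $r=1$ corresponds exactly to $P=Q$, where both sides of \eqref{eq30} hold trivially. Hence I assume $P\neq Q$, equivalently $r>1$, throughout. The starting point is the compact form of the distance obtained inside the proof of Theorem \ref{theo:dPQ}, namely \eqref{eq28}, $d_F(P,Q)=\frac1\lambda\ln\big(t_2/(t_2-1)\big)$, with $t_2$ as in \eqref{eq19}, $k$ as in \eqref{eq16}, and the positivity $t_2>1$ guaranteed by Claim \ref{AF}.

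Exponentiating, $d_F(P,Q)=\frac1\lambda\ln r$ is equivalent to $t_2/(t_2-1)=r$. Substituting $t_2=\big(\sqrt{k}-\lambda\langle P,Q-P\rangle\big)/\big(\lambda\Vert Q-P\Vert^2\big)$ and clearing denominators turns this into the single equation $(r-1)\big(\sqrt{k}-\lambda\langle P,Q-P\rangle\big)=r\lambda\Vert Q-P\Vert^2$, that is,
\[ \sqrt{k}=\lambda\langle P,Q-P\rangle+\frac{r\lambda\Vert Q-P\Vert^2}{r-1}. \]
Since the right-hand side is positive, squaring is reversible; expanding $k$ via \eqref{eq16}, the $\langle P,Q-P\rangle^2$ terms cancel, and after dividing by $\Vert Q-P\Vert^2\neq 0$ I expect to arrive at
\[ (r-1)^2=\lambda^2\Big(\Vert P\Vert^2(r-1)^2+2r(r-1)\langle P,Q-P\rangle+r^2\Vert Q-P\Vert^2\Big). \]

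Independently, I would expand the right-hand condition of \eqref{eq30}. Writing $\frac{P}{r}-Q=-\frac{r-1}{r}P-(Q-P)$ gives
\[ \left\Vert\frac{P}{r}-Q\right\Vert^2=\frac{(r-1)^2}{r^2}\Vert P\Vert^2+2\,\frac{r-1}{r}\langle P,Q-P\rangle+\Vert Q-P\Vert^2, \]
and because both sides of $\Vert P/r-Q\Vert=\frac1\lambda\frac{r-1}{r}$ are nonnegative for $r\geq1$, that condition is equivalent to its square; multiplying through by $\lambda^2r^2$ reproduces exactly the displayed identity above, so chaining the equivalences proves \eqref{eq30}. The only delicate points are not the algebra but the reversibility of the two squaring steps and the division by $\Vert Q-P\Vert^2$: these are handled by the standing assumption $P\neq Q$ (so $r>1$ and $\Vert Q-P\Vert^2>0$) together with $\sqrt{k}-\lambda\langle P,Q-P\rangle=\lambda\Vert Q-P\Vert^2\,t_2>0$ from Claim \ref{AF}, which keeps every cross-multiplied or squared factor of a definite sign.

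Alternatively, and more cleanly, one can bypass the computation by a scaling argument. The dilation $\Phi(x)=\lambda x$ maps $\Omega_\lambda$ diffeomorphically onto $\mathbb{B}^2$, and a direct check on \eqref{eq5} shows that the Funk metric $\widehat F$ (the case $\lambda=1$) satisfies $\widehat F(\lambda x,\lambda y)=\lambda F(x,y)$. Substituting into the arc-length functional of Definition \ref{def:arc} and taking infima over curves yields $d_F(P,Q)=\frac1\lambda\,d_{\widehat F}(\lambda P,\lambda Q)$, where $d_{\widehat F}$ is the Funk distance on $\mathbb{B}^2$. Applying Theorem \ref{maintheoremPF} to $\lambda P,\lambda Q\in\mathbb{B}^2$ and using $\Vert \lambda P/r-\lambda Q\Vert=\lambda\Vert P/r-Q\Vert$ then converts its conclusion term-by-term into \eqref{eq30}, giving the result at once.
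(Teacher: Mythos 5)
Your primary argument has a genuine gap at the squaring step, and it sits exactly at the delicate point. After reducing $d_F(P,Q)=\frac1\lambda\ln r$ to $\sqrt{k}=\beta$ with $\beta:=\lambda\langle P,Q-P\rangle+\frac{r\lambda\Vert Q-P\Vert^2}{r-1}$, you assert that ``the right-hand side is positive, [so] squaring is reversible.'' That positivity is not automatic: $\frac{r-1}{\lambda}\,\beta=\langle rQ-P,\,Q-P\rangle$, which is negative, for instance, whenever $P=2Q\neq O$ and $r>2$. Squaring is therefore harmless in the direction [$\sqrt{k}=\beta$] $\Rightarrow$ [$k=\beta^2$], but the converse — which is precisely the direction needed to pass from the geometric identity in \eqref{eq30} back to the distance equation — requires showing that $\beta\geq0$ \emph{whenever the polynomial identity holds}. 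This is what the paper's proof supplies: assuming the (equivalent) condition $\Vert rQ-P\Vert=\frac{r-1}{\lambda}$, Cauchy--Schwarz together with $\Vert Q\Vert<\frac1\lambda$ gives
\begin{equation*}
\langle rQ-P,Q-P\rangle=\Vert rQ-P\Vert^2-(r-1)\langle Q,rQ-P\rangle>\frac{(r-1)^2}{\lambda^2}-\frac{(r-1)^2}{\lambda^2}=0.
\end{equation*}
Your closing claim that the reversibility is handled by $\sqrt{k}-\lambda\langle P,Q-P\rangle>0$ from Claim \ref{AF} is incorrect: that inequality controls the sign of the opposite side of your equation and gives no lower bound on $\beta$. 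As written, your first argument establishes only the implication from $d_F(P,Q)=\frac1\lambda\ln r$ to $\left\Vert \frac{P}{r}-Q\right\Vert=\frac1\lambda\frac{r-1}{r}$, not the converse; to close it you must import the paper's positivity step (or an equivalent use of $Q\in\Omega_\lambda$).

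Your alternative scaling argument, by contrast, is correct and complete, and it is genuinely different from the paper's route. The homogeneity $\widehat F(\lambda x,\lambda y)=\lambda F(x,y)$ is a direct check on \eqref{eq5}; since $c\mapsto\lambda c$ is a bijection between piecewise regular curves in $\Omega_\lambda$ joining $P,Q$ and those in $\mathbb{B}^2$ joining $\lambda P,\lambda Q$, Definition \ref{def:arc} gives $d_F(P,Q)=\frac1\lambda d_{\widehat F}(\lambda P,\lambda Q)$, and Theorem \ref{maintheoremPF} applied to $\lambda P,\lambda Q$ converts term by term into \eqref{eq30}. The paper instead argues from scratch: it starts from $\left\Vert\frac{P}{r}-Q\right\Vert=\frac1\lambda\frac{r-1}{r}$, squares, de-squares using the positivity argument above plus a verification that $\sqrt{k}-\lambda\langle Q,Q-P\rangle\neq0$, and matches the explicit formula \eqref{eq29} of Theorem \ref{theo:dPQ}. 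Your route buys brevity and conceptual clarity — the $\lambda$-Funk metric is a rescaled Funk metric, so the identity transfers by dilation — at the cost of leaning on the cited Theorem \ref{maintheoremPF}; the paper's computation is self-contained, makes explicit where $Q\in\Omega_\lambda$ enters, and (as the paper remarks) is written so that it extends verbatim to $\mathbb{B}^n(1/\lambda)$, whereas Theorem \ref{maintheoremPF} is stated only for $\mathbb{B}^2$. My recommendation: lead with the scaling argument, or keep the computational one but replace the unjustified positivity assertion with the Cauchy--Schwarz step.
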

\begin{proof}
	If $r=1$, we have \[d_F(P,Q)=\dfrac{1}{\lambda}\ln 1=0\Leftrightarrow P=Q\Leftrightarrow \dfrac{1}{\lambda}\dfrac{1-1}{1}=0=\dfrac{1}{\lambda}\Vert P-Q\Vert=\dfrac{1}{\lambda}\left\Vert \dfrac{P}{1}-Q\right\Vert.\]
	If $r>1$, we have
	\[ \left\Vert \dfrac{P}{r}-Q\right\Vert=\dfrac{1}{\lambda}\dfrac{r-1}{r}\Leftrightarrow \left\Vert P\left(\dfrac{r-1}{r}\right)+(Q-P)\right\Vert=\dfrac{1}{\lambda}\dfrac{r-1}{r}\Leftrightarrow \left\Vert P+\left(\dfrac{r}{r-1}\right)(Q-P)\right\Vert^2=\dfrac{1}{\lambda^2}\]
	\[ \Leftrightarrow 1-\lambda^2\Vert P\Vert^2=\lambda^2\left(\dfrac{r}{r-1}\right)^2\Vert Q-P\Vert^2+2\lambda^2\left(\dfrac{r}{r-1}\right)\langle P,Q-P\rangle.\]
	Note that $P\neq Q$ since $r>1$, thus $\Vert P-Q\Vert\neq0$ and $r-1\neq0$. Therefore, multiplying the above equality by $(r-1)^2\Vert Q-P\Vert^2$ we get
	\[
	\Leftrightarrow (r-1)^2(1-\lambda^2\Vert P\Vert^2)\Vert Q-P\Vert^2=\lambda^2r^2\Vert Q-P\Vert^4+2\lambda^2r(r-1)\langle P,Q-P\rangle\Vert Q-P\Vert^2\]
	\begin{equation}\label{eq31}
		\Leftrightarrow \begin{array}{c}
			(r-1)^2\left(\lambda^2\langle P,Q-P\rangle^2+\Vert Q-P\Vert^2(1-\lambda^2\Vert P\Vert^2)\right)\\ =\lambda^2\left((r-1)\langle P,Q-P\rangle+r\Vert Q-P\Vert^2\right)^2.\end{array}
	\end{equation} 
	Now, since $\langle Q,rQ-P\rangle <\dfrac{1}{\lambda} \Vert rQ-P\Vert=\dfrac{r-1}{\lambda^2}$ (since $Q\in\Omega_\lambda$) and $r-1>0,$  we have
	\[\begin{array}{rl}\lambda^2\left[(r-1)\langle P,Q-P\rangle+r\Vert Q-P\Vert^2\right]&=\lambda^2\langle rQ-P,Q-P\rangle\\ &=\lambda^2\left[\Vert rQ-P\Vert^2-(r-1)\langle Q,rQ-P\rangle\right]\\&>\lambda^2[(r-1)^2-(r-1)^2]=0.\end{array}
	\]
	Thus, being $r-1>0$, \eqref{eq31} is equivalent to 
	\begin{equation}\label{eq32} \Leftrightarrow (r-1)\sqrt{\lambda^2\langle P,Q-P\rangle^2+\Vert Q-P\Vert^2(1-\lambda^2\Vert P\Vert^2)}=\lambda[(r-1)\langle P,Q-P\rangle+r\Vert Q-P\Vert^2].\end{equation}
	Since $Q\in \Omega_\lambda$ and $P\neq Q$ we have that 
	\[ \lambda^2[\langle Q,Q-P\rangle^2-\langle P,Q-P\rangle^2]=\lambda^2\left[\Vert Q\Vert^2-\Vert P\Vert^2\right]\Vert Q-P\Vert^2\neq(1-\lambda^2\Vert P\Vert ^2)\Vert Q-P\Vert^2\]
	\[\Leftrightarrow \sqrt{\lambda^2\langle P, Q-P\rangle^2+(1-\lambda^2\Vert P\Vert^2)\Vert Q-P\Vert^2}-\lambda\langle Q, Q-P \rangle\neq0.\]
	Therefore, we obtain that \eqref{eq32} is equivalent to
	\[\Leftrightarrow r=\frac{\sqrt{\lambda^2\langle P, Q-P \rangle^2+(1-\lambda^2\Vert P\Vert^2)\Vert Q-P\Vert^2}-\lambda\langle P, Q-P \rangle}{\sqrt{\lambda^2\langle P, Q-P\rangle^2+(1-\lambda^2\Vert P\Vert^2)\Vert Q-P\Vert^2}-\lambda\langle Q, Q-P \rangle}\]
	\[\Leftrightarrow\dfrac{1}{\lambda}\ln r=\dfrac{1}{\lambda}\ln\left(\frac{\sqrt{\lambda^2\langle P, Q-P \rangle^2+(1-\lambda^2\Vert P\Vert^2)\Vert Q-P\Vert^2}-\lambda\langle P, Q-P \rangle}{\sqrt{\lambda^2\langle P, Q-P\rangle^2+(1-\lambda^2\Vert P\Vert^2)\Vert Q-P\Vert^2}-\lambda\langle Q, Q-P \rangle}
	\right)=d_F(P,Q).\]
\end{proof}
Note that, in the proof of the previous theorem, the properties of the Euclidean norm were used. That is, the theorem remains valid for points $P$ and $Q$ in $\mathbb{B}^n(1/\lambda), \, n\geq 2$.
\begin{remark}\label{obsrota}{\em 
		Note that using equation \eqref{eq30} it is easier to show that $d_F$ is invariant under rotations, that is, 
		\begin{align}\label{eq33}
			d_F(RP,RQ)=d_F(P,Q),
		\end{align}	
		where $P,Q\in\Omega_\lambda$ and $ R= \left(\begin{array}{cc} \cos{\theta} & \sin{\theta} \\ -\sin{\theta} & \cos{\theta} \end{array}\right) $.}
\end{remark} 

\begin{remark}{\em
		Consider $ P=(0,0) $ in \eqref{eq30}, then \[\dfrac{1}{\lambda}\Vert Q\Vert=\dfrac{r-1}{r}.\] When $ \Vert Q\Vert\rightarrow \frac{1}{\lambda}$, that is, when $Q$ approaches the boundary of $\Omega_\lambda$ then $ r\rightarrow +\infty $, consequently  $d_F(P,Q) \rightarrow +\infty $. This means, together with Property 2 in Remark 2.8 in \cite{Chavez2021}, that from any point in  $\Omega_\lambda$, our boat will not be able to leave $ \Omega_\lambda$. On the other hand, consider $Q=(0,0)$ in \eqref{eq30}, then \[\Vert P\Vert=\dfrac{r-1}{\lambda}.\] When $ \Vert P\Vert\rightarrow \frac{1}{\lambda} $, then $ r\rightarrow 2 $, consequently  $d_F(P,Q) \rightarrow \frac{\ln 2}{\lambda}$. Thus, from the boundary of $\Omega_\lambda$, our boat can reach the origin in a time equal to $\dfrac{\ln 2}{\lambda}$.}
\end{remark}
\subsection{Circumference}
Since the Funk distance is not symmetric, that is, $d_F(P,Q)\neq d_F(Q,P)$, we have two interpretations for the notion of circumference, these are, the ``output'' from the center, called \textit{type 1}; and the ``input'' to the center, called \textit{type 2}. 
\begin{definition}{\em
		Given $P$ a point in $\Omega_\lambda$ and $r\geq1$ a real number, we define the \textit{type 1 Funk circumference}, with center $P$ and radius $\dfrac{\ln r}{\lambda}$, as the points $X\in\Omega_\lambda$ that satisfy the following equation: 
		\begin{align*}
			d_F(P,X)=&\dfrac{\ln r}{\lambda}.
	\end{align*} }
\end{definition}

By \eqref{eq30}, we have that the equation of the type 1 Funk circumference with center $P=(a,b)$ and radius $\dfrac{\ln r}{\lambda}$, is given by: 
\begin{align}\label{eq34} 
	\left(x_1-\dfrac{a}{r}\right)^2+\left(x_2-\dfrac{b}{r}\right)^2=\left(\frac{r-1}{\lambda r}\right)^{2}.
\end{align}
\begin{figure}[H]
	\centering 
	\includegraphics[width=10cm, trim= 100mm 200mm 100mm 200mm,clip]{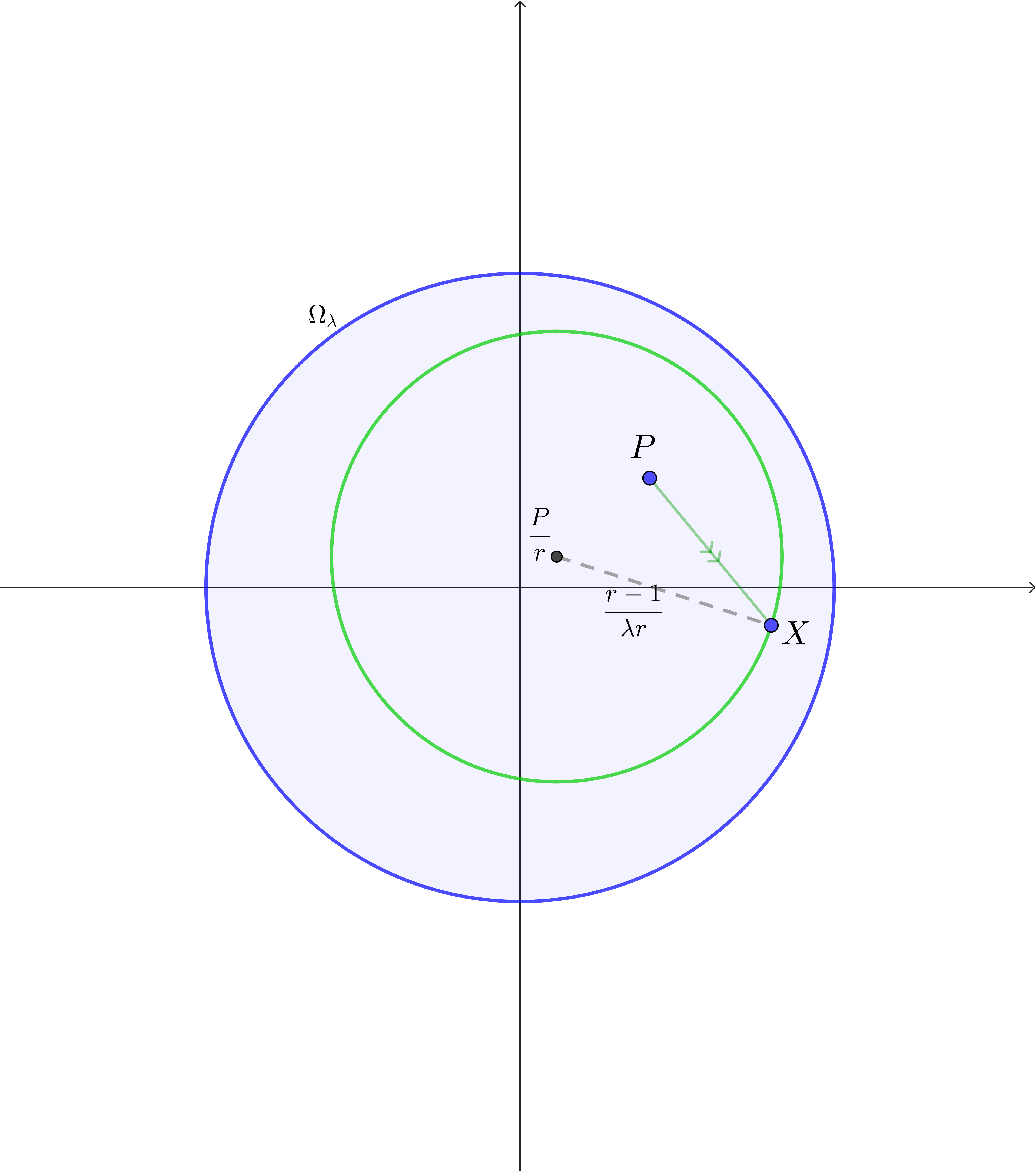} 
	\caption{$d_F(P,X)=\frac{\ln r}{\lambda}$ .}
	\label{fig:circ1}
\end{figure}

Equation \eqref{eq34} describes the graph of a Euclidean circle in $\mathbb{R}^2$ with center at $\frac{P}{r}$ and radius $\frac{r-1}{\lambda r}$ (see Figure \ref{fig:circ1}). 

\begin{definition}{\em
		Given $P$ a point in $\Omega_\lambda$ and $r\geq 1$ a real number, we define the \textit{type 2 Funk circumference}, with center $P$ and radius $\dfrac{\ln r}{\lambda}$, as the points $X\in\mathbb{B}^2$ that satisfy the following equation: 
		\begin{align*}
			d_F(X,P)=&\dfrac{\ln r}{\lambda}.
	\end{align*} }
\end{definition}
By \eqref{eq30}, we have that the equation of the type 2 Funk circumference with center $P=(a,b)$ and radius $\dfrac{\ln r}{\lambda}$, is part of the Euclidean circle with center at $(ra,rb)$ and radius $\dfrac{r-1}{\lambda}.$ (see Figure \ref{fig:circ2}).  
\begin{align*} 
	\left(a-\dfrac{x_1}{r}\right)^2+\left(b-\dfrac{x_2}{r}\right)^2=\left(\frac{r-1}{\lambda r}\right)^{2},
\end{align*}
or, equivalently,
\begin{align}\label{eq35}
	(x_1-ra)^2+(x_2-rb)^2=\left(\dfrac{r-1}{\lambda}\right)^2.
\end{align}

\begin{figure}[H]
	\centering 
	\includegraphics[width=10cm, trim= 100mm 200mm 100mm 200mm,clip]{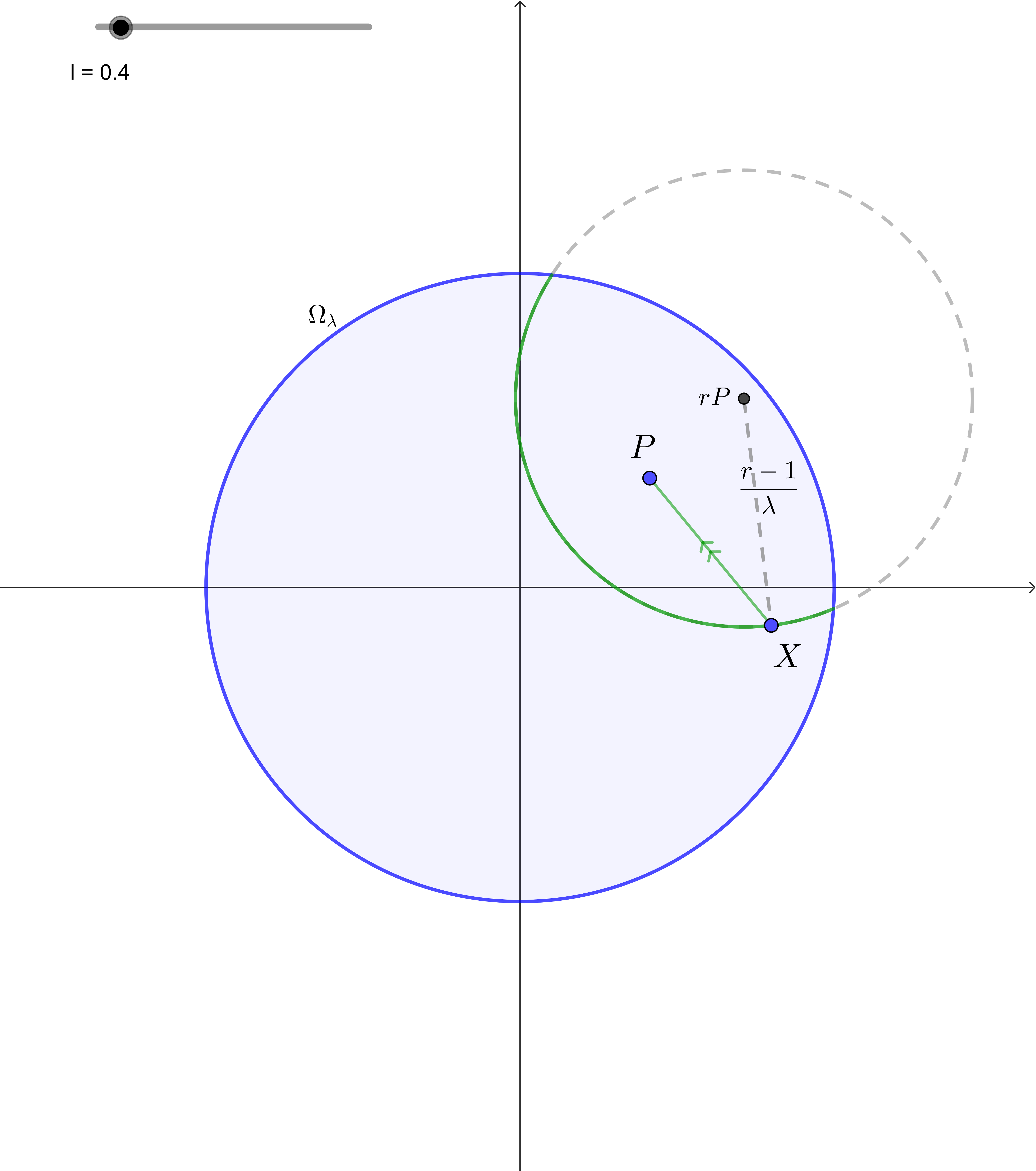} 
	\caption{$d_F(X,P)=\frac{\ln r}{\lambda}$ .}
	\label{fig:circ2}
\end{figure}

\subsection[distance]{$\lambda-$Funk distance from a Line to a Point}

\begin{definition}{\em
		We define the \textit{distance}, $d_F(s,Q),$ from the line $s$ to the point $Q$ as:
		\begin{align*}
			d_F(s,Q):=\min\{d(P',Q);\;P'\in s\}.
		\end{align*}
		We say that the point $P*\in s$ \textit{realizes the distance from $s$ to the point $Q$}, if \[d_F(P*,Q)\leq d_F(P',Q), \text{ for all } P'\in s.\]}
\end{definition}
\begin{theorem}\label{theo:dsP}
	The $\lambda-$Funk distance from the line $s:y=mx + c$ to the point $Q=(a,b)$ is given by:
	\begin{align}\label{eq39}
		d_{F}(s,Q)=\dfrac{\ln{r}}{\lambda}=\dfrac{1}{\lambda}\ln{\left(\frac{1-\lambda^2c\cos\theta(b\cos{\theta} - a \sin\theta) + \lambda|c\cos\theta - b\cos{\theta} + a \sin\theta|}{1- \lambda^2(b\cos{\theta} - a \sin\theta)^{2}}\right)},
	\end{align}
	where $\theta=\arctan m.$ And, this distance is realized by \begin{align}\label{eq40}
		P*=\left(\begin{array}{cc} \cos{\theta} & -\sin\theta \\ \sin\theta & \cos{\theta} \end{array}\right)\left(\begin{array}{c} {{a}{r}\cos{\theta} + {b}{r} \sin\theta} \\ c\cos{\theta} \end{array}\right).    
	\end{align}
\end{theorem}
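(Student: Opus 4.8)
The plan is to combine two facts already at our disposal: the rotation invariance of $d_F$ (Remark \ref{obsrota}) and the description of the \emph{type 2} circumferences in \eqref{eq35}. Using the first I will straighten the line $s$ into a horizontal one, and using the second I will convert the Finsler minimization $\min_{P'\in s}d_F(P',Q)$ into a purely Euclidean tangency problem between a line and a growing family of circles.

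First I would set $\theta=\arctan m$ and apply the rotation $R$ of Remark \ref{obsrota}. A direct computation shows that $R$ carries the line $s:y=mx+c$ onto the horizontal line $\tilde s:y=c\cos\theta$, and carries $Q=(a,b)$ onto $\tilde Q=RQ=(\tilde a,\tilde b)$ with $\tilde a=a\cos\theta+b\sin\theta$ and $\tilde b=b\cos\theta-a\sin\theta$. Writing $\tilde c:=c\cos\theta$, rotation invariance gives $d_F(s,Q)=d_F(\tilde s,\tilde Q)$, and since $R$ is a bijection of $s$ onto $\tilde s$ the minimizer transforms by $P^*=R^{-1}\tilde P^*$. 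This already explains the appearance of $b\cos\theta-a\sin\theta$ and $c\cos\theta$ in \eqref{eq39}.

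Next, by Theorem \ref{maintheorem} (equivalently the type 2 description \eqref{eq35}), a point $P'$ satisfies $d_F(P',\tilde Q)=\tfrac1\lambda\ln r$ if and only if $P'$ lies on the Euclidean circle of center $(r\tilde a,r\tilde b)$ and radius $\tfrac{r-1}{\lambda}$. Minimizing $d_F(P',\tilde Q)$ over $P'\in\tilde s$ is therefore the same as finding the smallest $r\ge 1$ for which this circle meets the horizontal line $y=\tilde c$; since $\tfrac1\lambda\ln r$ is increasing in $r$, the minimum is attained at tangency. The tangency condition equates the Euclidean distance $|r\tilde b-\tilde c|$ from the center to the line with the radius, i.e. $\lambda|r\tilde b-\tilde c|=r-1$. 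Squaring yields the quadratic
\begin{equation*}
 h(r):=(1-\lambda^2\tilde b^2)\,r^2-2(1-\lambda^2\tilde b\tilde c)\,r+(1-\lambda^2\tilde c^2)=0,
\end{equation*}
whose discriminant simplifies to $\lambda^2(\tilde c-\tilde b)^2$, so its roots are $r_\pm=\dfrac{(1-\lambda^2\tilde b\tilde c)\pm\lambda|\tilde c-\tilde b|}{1-\lambda^2\tilde b^2}$.

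The final and most delicate step is selecting the correct root. Because $\tilde Q\in\Omega_\lambda$ forces $|\tilde b|<1/\lambda$, the leading coefficient $1-\lambda^2\tilde b^2>0$, so $h$ is an upward parabola; moreover $h(1)=-\lambda^2(\tilde b-\tilde c)^2\le 0$, which shows $r_-\le 1\le r_+$. As the circle meets the line exactly when $h(r)\ge 0$, i.e. for $r\ge r_+$, the minimizing value is $r=r_+$, the root with the $+$ sign. Substituting $\tilde b=b\cos\theta-a\sin\theta$, $\tilde c=c\cos\theta$ and $\tilde c-\tilde b=c\cos\theta-b\cos\theta+a\sin\theta$ then reproduces \eqref{eq39}. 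At tangency the contact point is the foot of the perpendicular from the center to $y=\tilde c$, namely $\tilde P^*=(r\tilde a,\tilde c)=(ar\cos\theta+br\sin\theta,\,c\cos\theta)$; applying $R^{-1}=R^{\mathsf T}$ gives exactly \eqref{eq40}. I expect the main obstacle to be precisely this root-selection argument: one must discard $r_-$ and confirm that the geometric minimum sits at the larger root, which is what the sign of $h(1)$ together with the positivity of the leading coefficient (guaranteed by $\tilde Q\in\Omega_\lambda$) deliver.
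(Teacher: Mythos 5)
Your proposal is correct and follows essentially the same route as the paper: rotate by $\theta=\arctan m$ to reduce to a horizontal line, describe the level sets of $P'\mapsto d_F(P',Q)$ as Euclidean circles via \eqref{eq35} (equivalently Theorem \ref{maintheorem}), impose tangency, and solve the resulting quadratic in $r$. If anything, your root-selection step (positive leading coefficient since $\tilde Q\in\Omega_\lambda$, $h(1)=-\lambda^2(\tilde b-\tilde c)^2\le 0$, hence $r_-\le 1\le r_+$ and the minimum sits at the larger root by monotonicity of $\tfrac{1}{\lambda}\ln r$) is more carefully justified than the paper's, which simply discards the $-$ root by asserting $r>1$.
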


\begin{proof}
	Given a line $s:\;x_2=mx_1+c$ and a point $Q=(a,b)\in\Omega_\lambda$, we will first analyze the particular case where $m=0$ and then analyze the general case. To determine the distance from the line $s$ to the point $Q$, we take a Funk circumference containing $Q$ and centered at some point on the line. The radius is $\dfrac{\ln{r}}{\lambda}$ and the center is at $P=(x_1,c)\in s$. According to Equation \eqref{eq35}, we have:
	\begin{align}\label{eq36}   
		\left(x_1- {r}a\right)^{2} + \left(c- rb\right)^{2}&=\left(\dfrac{r-1}{\lambda}\right)^2.
	\end{align}
	This is a quadratic equation in the variable $x_1$, and depending on $r$, it is possible to find two, one, or no solutions for $x_1$. We are only interested in finding a single solution, the one that minimizes the distance. Thus, from Equation \eqref{eq36}, we have the uniqueness conditions, $x_1=ra$ and
	\begin{align*}
		\left(c- rb\right)^{2}&=\left(\dfrac{r-1}{\lambda}\right)^{2}.
	\end{align*}
	Expanding the squares, we get a quadratic equation in $r$:
	\[(1-\lambda^2b^{2})r^{2}+2(\lambda^2bc-1)r+(1-\lambda^2c^{2})=0, \]
	whose roots are given by:
	\begin{align}\label{eq37}
		r=\frac{(1-\lambda^2bc)\pm \sqrt{(1-\lambda^2bc)^{2}-(1-\lambda^2b^{2})(1-\lambda^2c^{2})}}{1-\lambda^2b^{2}}. 
	\end{align} 
	Note that:
	\[
	(1-\lambda^2bc)^{2}-(1-\lambda^2b^{2})(1-\lambda^2c^{2})=\lambda^2(c-b)^{2}.
	\]
	Thus, equation \eqref{eq37} becomes $r=\dfrac{1-\lambda^2bc\pm \lambda|c-b|}{1-\lambda^2b^{2}}.$
	Remembering that $r>1$, we obtain:
	\begin{align}\label{eq38}
		r=\frac{1-\lambda^2bc+\lambda|c-b|}{1-\lambda^2b^{2}}.
	\end{align}
	Thus, the Funk distance from the line $s$ to the point $Q=(a,b)$, for the case where $m=0$, is given by:
	\begin{align*}
		d_F(s,Q)=\dfrac{\ln{r}}{\lambda}=\dfrac{1}{\lambda}\ln\left(\dfrac{1-\lambda^2bc+\lambda|c-b|}{1-\lambda^2b^2}\right).
	\end{align*}
	
	This distance is realized by the point, $P*=(ar,c)\in s.$
	We will later see how to determine this distance for any line $s$. To do this, we first observe that the force field is symmetric with respect to the origin (see equation \eqref{eq33}). Thus, we can apply a rotation to the axes and find values for $x_1$ and $r$ based on the results already obtained.
	
	Given a line $s:\;x_2=mx_1+c$, we rotate the coordinate axes by $\theta$ degrees, such that $m=\tan\theta$. Note that, in the new rotated axes, the line $s$ is a horizontal line (parallel to the $x_1$ axis), falling into the particular case where $m=0$. We determine the new coordinates of $Q=(a,b)$:
	\begin{align*}
		\left(\begin{array}{c} a' \\ b' \end{array}\right)= \left(\begin{array}{cc} \cos{\theta} & \sin{\theta} \\ -\sin{\theta} & \cos{\theta} \end{array}\right)\left(\begin{array}{c} a \\ b \end{array}\right)
	\end{align*}
	Thus, the new coordinates of $Q$ are $(a \cos \theta + b \sin \theta, b \cos\theta - a \sin \theta)$. 
	
	From \eqref{eq36} and \eqref{eq38}, we obtain:
	\[r=\frac{1-\lambda^2c\cos\theta(b\cos{\theta} - a \sin\theta) + \lambda|c\cos\theta - b\cos{\theta} + a \sin\theta|}{1-\lambda^2 (b\cos{\theta} - a \sin\theta)^{2}}\;\;\mbox{ and }\;\;x=r(a\cos{\theta} + b \sin\theta).\]
	
	Thus, we obtain the Funk distance from the line $s$ to the point $Q$, given by \eqref{eq39}. Moreover, this distance is realized by the point $P**=(r(a\cos{\theta} + b \sin\theta), c\cos{\theta}).$ And, to obtain the point on the original line (without rotation), it is sufficient to rotate back.
	
\end{proof}
\begin{example}
	Considering the vector field $W(x_1,x_2)=-2/5(x_1,x_2)$, the line $s: x+1$, and the point $Q=(1,1/10), $ (see Figure \ref{fig:distsP}). From Theorem \ref{theo:dsP}, we have, $d_F(s,Q)=1.36$, and $P^*=(0.45,1,45).$    \begin{figure}[H]
		\centering 
		\includegraphics[width=10cm, trim= 50mm 100mm 50mm 100mm,clip]{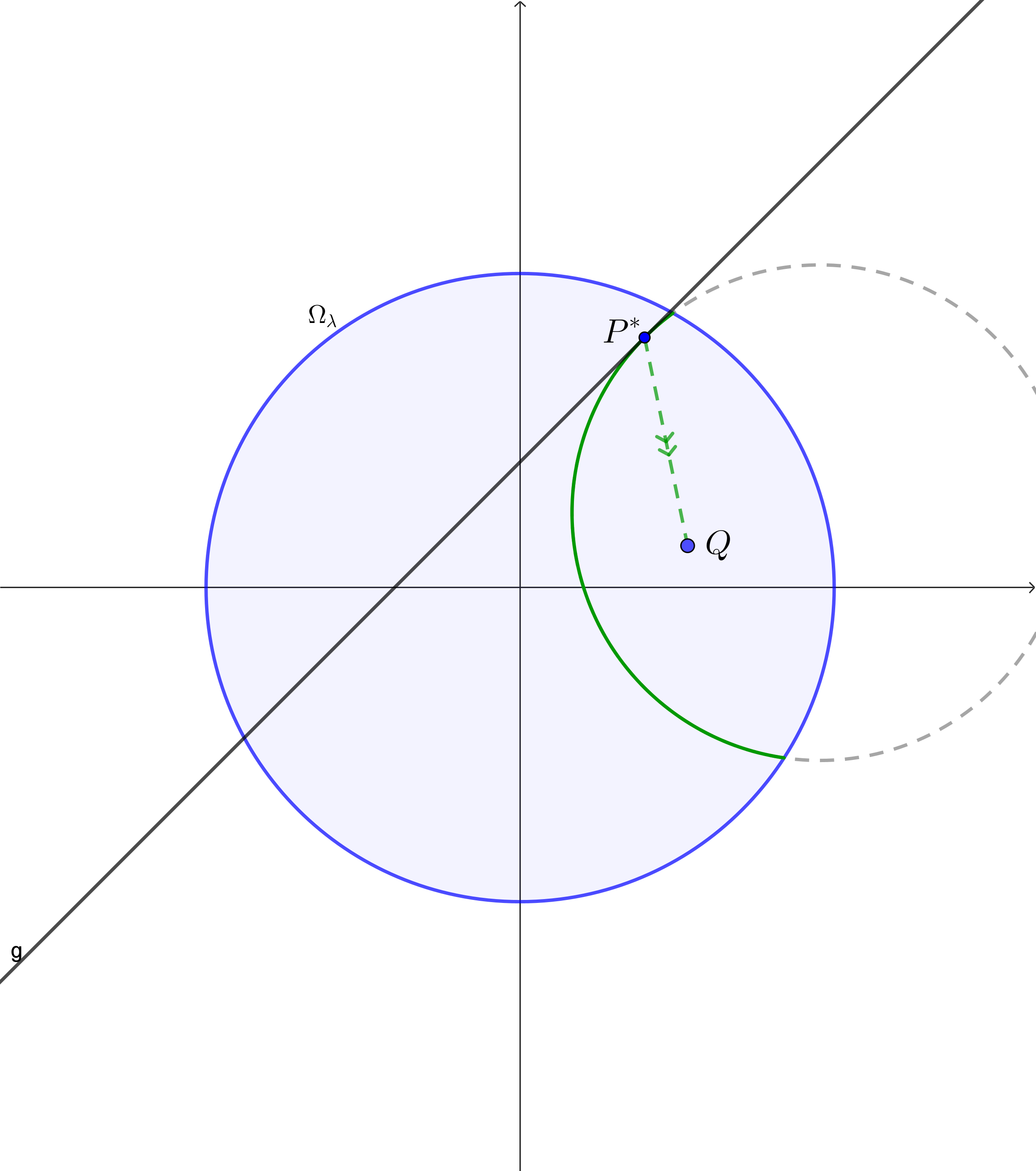} 
		\caption{$d_F(s,Q)=d_F(P^*,Q)= 1.36$ .}
		\label{fig:distsP}
	\end{figure}
\end{example}
\subsection[distancePtos]{$\lambda-$Funk Distance from a Point to a Line}

\begin{definition}{\em
		We define the \textit{distance}, $d_F(P,s),$ from the point $P$ to the line $s$ as:
		\begin{align*}
			d_F(P,s):=\min\{d(P,P');\;P'\in s\}.
		\end{align*}
		We say that the point $P*\in s$ \textit{realizes the distance from $P$ to the line $s$}, if \[d_F(P,P*)\leq d_F(P,P'), \text{ for all } P'\in s.\]}
\end{definition}
\begin{theorem}\label{theo:distPs}
	The $\lambda-$Funk distance from the point $P=(a,b)$ to the line $s:mx+c$ is given by:
	\begin{align}\label{eq42}
		d_F(P,s)=\dfrac{\ln{r}}{\lambda}=\dfrac{1}{\lambda}\ln\left(\frac{1-\lambda^2c\cos\theta(b\cos{\theta} - a \sin\theta) + \lambda|c\cos\theta - b\cos{\theta} + a \sin\theta|}{1-\lambda^2c^{2}\cos^2\theta}\right),
	\end{align}
	where $\theta = \arctan m.$ And, this distance is realized by
	\begin{align}\label{eq43}
		Q*=\left(\begin{array}{cc} \cos{\theta} & -\sin\theta \\ \sin\theta & \cos{\theta} \end{array}\right)\left(\begin{array}{c} {\frac{a}{r}\cos{\theta} + \frac{b}{r} \sin\theta} \\ c\cos{\theta} \end{array}\right).    
	\end{align}
\end{theorem}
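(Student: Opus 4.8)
The plan is to follow the same strategy as in the proof of Theorem~\ref{theo:dsP}, but to replace the \emph{type 2} circumference by the \emph{type 1} circumference, since now the point $P$ is the center from which we travel and the line $s$ is the target. Concretely, I would first settle the horizontal case $m=0$ and then recover the general case via the rotation invariance \eqref{eq33}. For a fixed $r>1$, the level set $\{X:\,d_F(P,X)=\frac{\ln r}{\lambda}\}$ is, by \eqref{eq34}, the Euclidean circle $\mathcal{C}_r$ centered at $P/r=(a/r,b/r)$ with radius $\frac{r-1}{\lambda r}$. The corresponding sublevel sets are the closed Euclidean disks bounded by these circles; they shrink to the single point $P$ as $r\to 1^{+}$ and increase with $r$. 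Hence $d_F(P,s)=\min_{P'\in s}d_F(P,P')$ is attained at the \emph{smallest} $r$ for which $\mathcal{C}_r$ meets $s$, and at that value $\mathcal{C}_r$ is tangent to $s$.

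For $m=0$ (line $s:\,x_2=c$), tangency of $\mathcal{C}_r$ to a horizontal line forces the contact point to lie directly above or below the center, i.e. $x_1=a/r$, together with the radius condition $\left(c-\frac{b}{r}\right)^2=\left(\frac{r-1}{\lambda r}\right)^2$. Clearing denominators turns this into the quadratic
\begin{align*}
(1-\lambda^2c^2)\,r^2+2(\lambda^2 bc-1)\,r+(1-\lambda^2 b^2)=0,
\end{align*}
which is precisely the quadratic appearing in Theorem~\ref{theo:dsP} with the roles of $b$ and $c$ interchanged (reflecting that the relevant center is now $P/r$ rather than $rP$). Consequently the same algebraic identity $(1-\lambda^2 bc)^2-(1-\lambda^2c^2)(1-\lambda^2b^2)=\lambda^2(c-b)^2$ simplifies the discriminant, and selecting the root with $r>1$ yields $r=\frac{1-\lambda^2 bc+\lambda|c-b|}{1-\lambda^2 c^2}$, with the distance realized at $P*=(a/r,c)$. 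This is exactly \eqref{eq42} and \eqref{eq43} in the case $\theta=0$.

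For general $m$, I would rotate the axes by $\theta=\arctan m$ using the matrix in \eqref{eq33}, so that $s$ becomes the horizontal line $x_2'=c\cos\theta$ while $P=(a,b)$ becomes $(a\cos\theta+b\sin\theta,\;b\cos\theta-a\sin\theta)$. Substituting $b\mapsto b\cos\theta-a\sin\theta$ and $c\mapsto c\cos\theta$ into the $m=0$ formula gives \eqref{eq42} directly, and rotating the realizing point $\left(\frac{a\cos\theta+b\sin\theta}{r},\,c\cos\theta\right)$ back by the inverse rotation produces \eqref{eq43}. The rotation invariance $d_F(RP,RQ)=d_F(P,Q)$ of \eqref{eq33} is what legitimizes this reduction.

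The main obstacle is conceptual rather than computational: one must justify that the minimum of $d_F(P,\cdot)$ over $s$ is attained at the tangency configuration. This rests on the fact that the type 1 Funk circles \eqref{eq34} bound nested convex Euclidean disks as $r$ grows, so that the first contact of a growing disk with $s$ is necessarily tangential and corresponds to the smallest admissible $r$. One must also handle the two root-selection issues: verifying that the chosen root satisfies $r>1$, and that the denominator $1-\lambda^2 c^2\cos^2\theta$ is positive. The latter is precisely the requirement $|c\cos\theta|<1/\lambda$, i.e. that the Euclidean distance from the origin to $s$ is less than $1/\lambda$, which is exactly the condition that $s$ meets the domain $\Omega_\lambda$; otherwise the distance is not defined. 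Once these points are settled, the remaining manipulations run parallel to those in Theorem~\ref{theo:dsP}.
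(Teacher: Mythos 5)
Your proposal is correct and follows essentially the same route as the paper's proof: reduce to the horizontal case $m=0$, use the type 1 circumference \eqref{eq34} centered at $P$, impose tangency (the paper phrases this as vanishing of the discriminant of the quadratic in $x_1$, which is equivalent to your contact-point condition $x_1=a/r$), solve the resulting quadratic in $r$ with the same identity $(1-\lambda^2bc)^2-(1-\lambda^2c^2)(1-\lambda^2b^2)=\lambda^2(c-b)^2$, select the root $r>1$, and then handle general $m$ by the rotation invariance \eqref{eq33}. Your added justifications --- that the nested sublevel disks make first contact tangentially (so tangency really does give the minimum), and that $1-\lambda^2c^2\cos^2\theta>0$ exactly when $s$ meets $\Omega_\lambda$ --- are points the paper passes over in silence, and they strengthen rather than alter the argument.
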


\begin{proof}
	Given the point $P=(a,b)$ and the line $s:\;x_2=mx_1+c$, we will proceed similarly as we did before to determine the Funk distance from the line to the point. We take a Funk circumference centered at $P$, with radius $\dfrac{\ln{r}}{\lambda}$ and intersecting with the line $s$. We take the point $Q\in s$ to be one of these intersections. First, we will verify the particular case where $m=0$. Thus, by Equation \eqref{eq30}, we have:
	\begin{align}\label{eq41}
		\left(x_1-\dfrac{a}{r}\right)^{2}+\left(c-\dfrac{b}{r}\right)^{2}=\left(\dfrac{r-1}{\lambda r}\right)^{2}.
	\end{align}
	Similarly, this is a quadratic equation in the variable $x_1$, and depending on $r$. However, we are only interested in finding a single solution, the one that minimizes the distance. 
	To obtain a unique solution of \eqref{eq41}, the discriminant of the quadratic equation in $x_1$ above must be equal to zero. Consequently, we will have a quadratic equation in $r$:
	\begin{align*}
		r^2(\lambda^2c^2-1)+2r(1-bc\lambda^2)+\lambda^2b^2-1=0,
	\end{align*}
	whose roots are given by: 
	\begin{align*}
		r=\frac{(\lambda^2bc-1)\pm \sqrt{(\lambda^2bc-1)^{2}-(\lambda^2c^{2}-1)(\lambda^2b^{2}-1)}}{\lambda^2c^{2}-1}.
	\end{align*}
	We observe: \[
	(\lambda^2bc-1)^{2} - (\lambda^2c^{2}-1)(\lambda^2b^{2}-1) =\lambda^2(c-b)^2.\]
	
	Thus, $r=\dfrac{(\lambda^2bc-1)\pm \lambda|c-b|}{\lambda^2c^2-1}.$ To ensure that $r>1$, we have $r=\dfrac{\lambda^2bc-1- \lambda|c-b|}{\lambda^2c^{2}-1}$ and $x_1=\dfrac{a}{r}.$ Thus, the Funk distance from the point $P$ to the line $s$ (case where $m=0$) is given by:
	\begin{align*}
		d_{F}(P,s)=\dfrac{\ln{r}}{\lambda}=\dfrac{1}{\lambda}\ln{\left(\dfrac{\lambda^2bc-1-\lambda|c-b|}{\lambda^2c^2-1}\right)}.
	\end{align*}
	
	This distance is realized by the point $Q^{*}=\left(\dfrac{a}{r},c\right)$.
	
	For the case where $s$ is any line, rotate the axes, as we did before, and we obtain \eqref{theo:distPs} which is realized by the point $Q^{**}=\left(\dfrac{a\cos{\theta} + b \sin\theta}{r}, c\cos{\theta}\right)$. Then, we rotate back to obtain the point on the original line.
\end{proof}
\begin{example}
	Considering the vector field $W(x_1,x_2)=-2/5(x_1,x_2)$, the line $s: x+1$, and the point $Q=(1,1/10), $ (see Figure \ref{fig:distPs}). From Theorem \ref{theo:distPs}, we have, $d_F(P,s)=1.4$, and $Q^*=(-0.19,0.81).$    \begin{figure}[H]
		\centering 
		\includegraphics[width=10cm, trim= 50mm 100mm 50mm 100mm,clip]{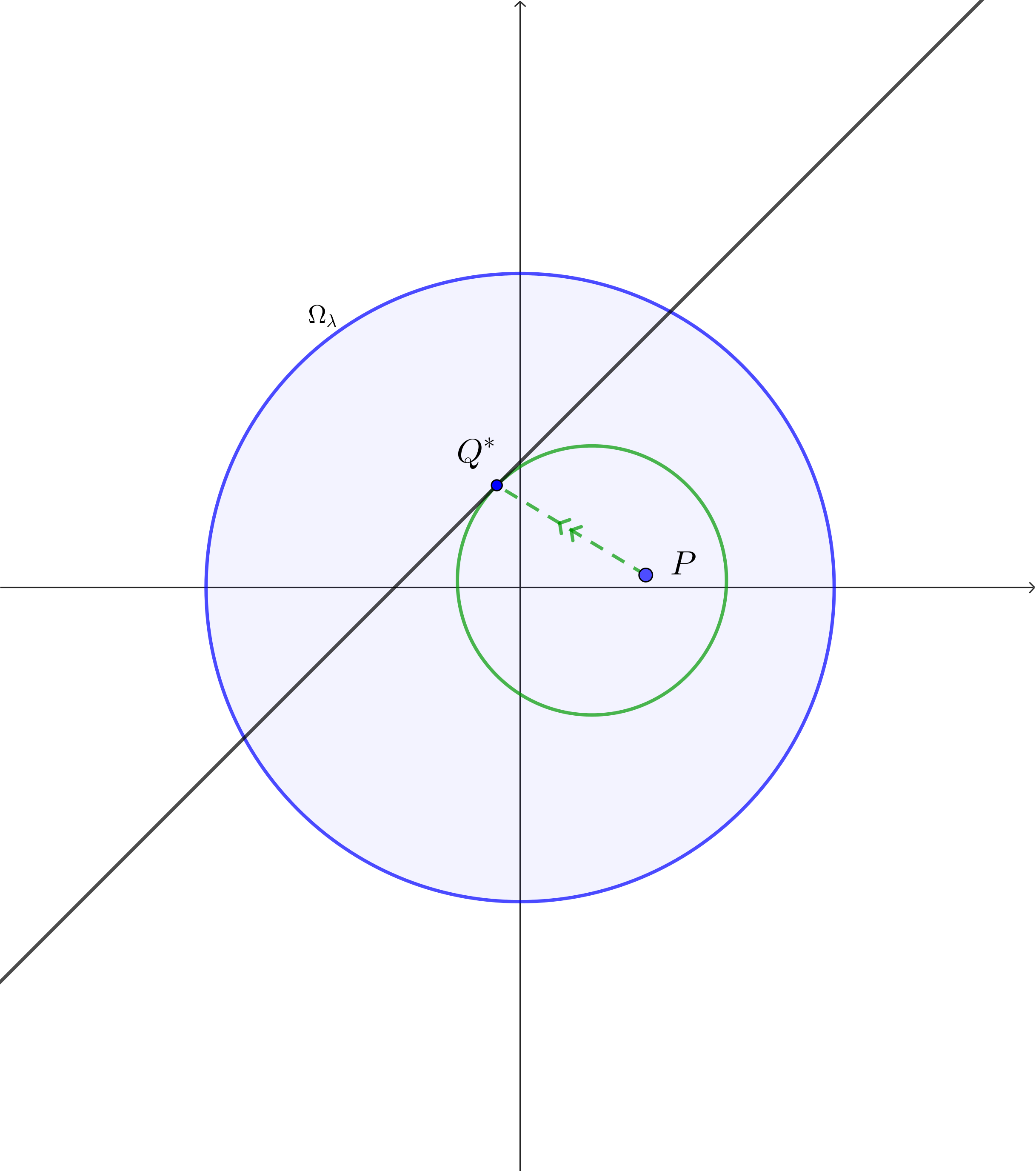} 
		\caption{$d_F(P,s)=d_F(P,Q^*)= 1.4$ .}
		\label{fig:distPs}
	\end{figure}
\end{example}

\section{Conclusion}
The Zermelo's navigation problem considered in this work was a boat navigating through a lake-like suitable disk, and the wind current was modeled by vector field $W_\lambda(x_1,x_2)=\lambda(-x_1,-x_2),$ where $\lambda \geq 0.$ With this, we defined the $\lambda-$Funk metric (Definition \ref{def:lambdafunk}) which generalize the usual Euclidean norm ($\lambda=0$) and the Funk metric given by \eqref{eq1} ($\lambda=1$). We prove that $\lambda-$Funk metric is spherically symmetric Finsler metric, with this, we prove the shortest path are straight lines (Theorem \ref{theo:flat}). The time traveling or the called $\lambda-$Funk distance induced by the $\lambda-$Funk metric was obtained in Theorem \ref{theo:dPQ}. Although the expression of this distance is complicated, Theorem \ref{maintheorem} gives us a useful tool to obtain the equation of the circumferences (equations \eqref{eq34} and \eqref{eq25}), the distance from line to point (Theorem \ref{theo:dsP}) and from point to line (Theorem \ref{theo:distPs}).

\end{document}